\newcolumntype{C}{>{$}c<{$}}
\theoremstyle{plain}
\newtheorem{theorem}{Theorem}[section]
\newtheorem{lemma}{Lemma}[section]
\newtheorem{proposition}{Proposition}[section]
\newtheorem{example}{Example}[section]
\newtheorem{conjecture}{Conjecture}[section]
\theoremstyle{definition}
\newtheorem{remark}{Remark}[section]
\numberwithin{equation}{section}
\newdimen\plusheight
\def\+{\;\lower\plusheight\hbox{$+$}\;}
\newdimen\minusheight
\def\-{\;\lower\minusheight\hbox{$-$}\;}
\newdimen\cdotsheight
\def\cds{\lower\cdotsheight\hbox{$\cdots$}}
\def\C{\mathbb{C}}
\begin{document}

\title{The Modularity of Z.-W. Sun's Conjectural Formulas for $\frac{1}{\pi}$}

\author{Mark van Hoeij, Wei-Lun Tsai, and Dongxi Ye}

\address{Florida State University, Tallahassee, FL, USA}

\email{hoeij@math.fsu.edu}

\address{Department of Mathematics, University of South Carolina,
 1523 Greene St LeConte College Rm 450
 Columbia, SC 29208}

\email{weilun@mailbox.sc.edu}

\address{
School of Mathematics (Zhuhai), Sun Yat-sen University, Zhuhai 519082, Guangdong,
People's Republic of China}

\email{yedx3@mail.sysu.edu.cn}

\subjclass[2020]{11F03, 11Y60, 33C05}
\keywords{Classical hypergeometric functions, Ramanujan type 
 $1/\pi$ formulas, Modular forms}

\thanks{Mark van Hoeij was supported by NSF grant 2007959. Wei-Lun Tsai was supported by the AMS-Simons Travel Grant. Dongxi Ye was supported by the Guangdong Basic and Applied Basic Research
Foundation (Grant No. 2023A1515010298).}

\begin{abstract}
    In this work, we establish modular parameterizations for two general formulas for $\frac{1}{\pi}$ that subsume conjectural Ramanujan type formulas due to Z.-W. Sun, which have remained open since 2011. As an application of this, in a conceptual way we interpret how Sun's conjectural formulas arise and can be verified, as well as recover other cases that were proved by Cooper, Wan and Zudilin.
\end{abstract}

\maketitle

\allowdisplaybreaks

\section{Introduction}

In one \cite{R14} of the imperative building blocks of his influential mathematics, being facilitated by Jacobi's elliptic integrals and hypergeometric functions Ramanujan ingeniously produces the peculiar yet very intriguing formula
$$
\sum_{n=0}^{\infty}\binom{2n}{n}^{3}\left(n+\frac{5}{42}\right)\frac{1}{2^{12n}}=\frac{8}{21\pi}
$$
and additionally conjectures 17 analogous formulas that are all in form of
$$
\sum_{n=0}^{\infty}a(n)(n+b)=\frac{c}{\pi}
$$
with $a(n),b,c$ being rational numbers by which he introduces ones to a fascinating area of his mathematics called the Ramanujan type series for~$\frac{1}{\pi}$. Following Ramanujan's initiation, as is inspired by his work, relevant researchers also make use of elliptic integrals, modular forms and hypergeometric functions to validate Ramanujan's conjectural formulas and develop new theories to produce uncharted Ramanujan type series. The reader is referred to \cite{BBC} for a brief account of early developments of the topic, and \cite{C17} and the references wherein for recent progress. 

Besides the aforementioned analytic tools, on the path of studying Ramanujan type series a variety of methodologies have also been developed. A notable one that is via supercongruences is due to Z.-W. Sun, who motivated by works of van Hamme \cite{vH} makes use of supercongruences for binomial-like numbers to construct vast novel Ramanujan type series and discover numerous conjectural formulas. In a series of work \cite{S11b, S14, S19, S23}, Sun records a large number of conjectural Ramanujan type series, such as (see \cite[Conj. 41]{S19})
$$
\sum_{n=0}^{\infty}\binom{2n}{n}\left(\sum_{k=0}^{n}\binom{2k}{k}^{2}\binom{2(n-k)}{n-k}\left(\frac{9}{4}\right)^{n-k}\right)\left(n+\frac{1}{12}\right)\left(\frac{1}{100}\right)^{n}=\frac{75}{48\pi}.
$$
While many such formulas have been proved
over the past decade, see Sun's book \cite{S11} for an overview,
others including the one above remained open and challenging for previous methods. 
The main goal of this work is to prove two classes of conjectural Ramanujan type series of Sun that are summarized as follows.

\begin{conjecture}[Z.-W. Sun \cite{S11}]\label{sunconj}
%\begin{enumerate}
    % \item
    % Define $P(t;x,b)$ in $t$ with parameters $x,b$ by
Let
\begin{align}\label{PT}
P(t;x,b)=\sum_{n=0}^{\infty}\binom{2n}{n}\left(\sum_{k=0}^{n}\binom{2k}{k}^{2}\binom{2(n-k)}{n-k}x^{n-k}\right)(n+b)t^{n}.
    \end{align}
    Then
    \begin{align}
        P\left(\frac{1}{100};\frac{9}{4},\frac{1}{12}\right)&=\frac{75}{48\pi},\label{cp1}\\
          P\left(-\frac{1}{192};4,\frac{1}{4}\right)&=\frac{\sqrt{3}}{4\pi},\label{p1}\\
        P\left(\frac{-1}{225};-14,-\frac{224}{17}\right)&=\frac{1800}{17\pi}, \label{cp2}\\
        P\left(\frac{1}{289};18,-\frac{256}{15}\right)&=\frac{2312}{15\pi}, \label{cp3}\\
        P\left(\frac{-1}{576};-32,-\frac{11}{20}\right)&=\frac{9}{2\pi}, \label{cp4}\\
        P\left(\frac{1}{640};36,-\frac{2}{3}\right)&=\frac{5\sqrt{10}}{3\pi}, \label{cp5}\\
        P\left(\frac{-1}{3136};-192,-\frac{67}{20}\right)&=\frac{49}{2\pi}, \label{cp6}\\
        P\left(\frac{1}{3200};196,-\frac{24}{7}\right)&=\frac{125\sqrt{2}}{7\pi}, \label{cp7}\\
        P\left(\frac{-1}{6336};-392,-\frac{32}{5}\right)&=\frac{99}{2\pi}, \label{cp8}\\
        P\left(\frac{1}{6400};396,-\frac{427}{66}\right)&=\frac{500\sqrt{11}}{33\pi}, \label{cp9}\\
        P\left(\frac{-1}{18432};-896,-\frac{7}{34}\right)&=\frac{27\sqrt{2}}{17\pi}, \label{cp10}\\
        P\left(\frac{1}{136^{2}};900,-\frac{5}{24}\right)&=\frac{867}{384\pi}. \label{cp11}
    \end{align}
%\item
% Define $W(t;x,b)$ in $t$ with parameters $x,b$ by
Let
\begin{align}
    \label{WT}
    W(t;x,b)=\sum_{n=0}^{\infty}\left(\sum_{k=0}^{n}\binom{n}{k}\binom{n+k}{k}\binom{2k}{k}\binom{2(n-k)}{n-k}x^{k}\right)(n+b)t^{n}.
\end{align}
Then
\begin{align}
 %   W\left(\frac{1}{6};-\frac{1}{8},\frac{1}{4}\right)&=\frac{\sqrt{72+42\sqrt{3}}}{2\pi}, \label{cw1}\\
    W\left(\frac{-1}{108};-\frac{49}{12},\frac{65}{392}\right)&=\frac{{387\sqrt{3}}}{392\pi}, \label{cw2}\\
    W\left(\frac{1}{112};\frac{63}{16},\frac{23}{168}\right)&=\frac{{59\sqrt{3}}}{54\pi}, \label{cw3}\\
    W\left(\frac{-1}{320};-\frac{405}{64},\frac{257}{1512}\right)&=\frac{{148\sqrt{35}}}{945\pi}, \label{cw4}\\
    W\left(\frac{1}{324};\frac{25}{4},\frac{9}{56}\right)&=\frac{{81\sqrt{35}}}{500\pi}, \label{cw5}\\
    W\left(\frac{-1}{1296};-\frac{625}{9},-\frac{1811}{13000}\right)&=\frac{{12339\sqrt{39}}}{16250\pi}, \label{cw6}\\
    W\left(\frac{1}{1300};\frac{900}{13},-\frac{1343}{9360}\right)&=\frac{{331\sqrt{39}}}{432\pi}, \label{cw7}\\
    W\left(\frac{-1}{5776};-\frac{83521}{361},\frac{2443}{56355}\right)&=\frac{{71839\sqrt{2}}}{58956\pi}, \label{cw8}\\
    W\left(\frac{1}{5780};\frac{1156}{5},\frac{253}{5928}\right)&=\frac{{2227\sqrt{2}}}{1824\pi}.\label{cw9}
\end{align}
%\end{enumerate}
\end{conjecture}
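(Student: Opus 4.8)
The plan is to recognize both generating functions $P(t;x,b)$ and $W(t;x,b)$ as coming from weight-one modular objects, and then to reduce each conjectural identity to an evaluation of Eisenstein-like modular forms (together with a logarithmic-derivative/quasimodular term) at a CM point. First I would note that the inner sum in $P(t;x,b)$ is a Cauchy product: $\sum_{k}\binom{2k}{k}^2\binom{2(n-k)}{n-k}x^{n-k}$ is the $n$-th coefficient of $\left(\sum_k\binom{2k}{k}^2 t^k\right)\cdot\left(\sum_j\binom{2j}{j}(xt)^j\right)$, i.e. of ${}_2F_1\!\left(\tfrac12,\tfrac12;1;16t\right)\cdot(1-4xt)^{-1/2}$, and then the extra $\binom{2n}{n}$ and the $(n+b)$ factor turn $P$ into $(bF + tF')$ evaluated at the relevant point, where $F$ is a Hadamard product of ${}_2F_1$'s. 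The key classical fact I would invoke is Clausen-type and algebraic-transformation machinery: such Hadamard products of ${}_2F_1\!\left(\tfrac12,\tfrac12;1;\cdot\right)$ with an algebraic function are, after an algebraic change of variable, equal to a single ${}_3F_2$ or to $\left({}_2F_1\right)^2$, hence to the square of a weight-$\tfrac12$ theta-like series, so $F$ itself is (essentially) a weight-one modular form on some $\Gamma_0(N)$, and $t$ is a Hauptmodul. The same analysis applies to $W(t;x,b)$, whose inner sum $\sum_k\binom{n}{k}\binom{n+k}{k}\binom{2k}{k}\binom{2(n-k)}{n-k}x^k$ is a Legendre/Apéry-like convolution that is known (Cooper--Wan--Zudilin) to satisfy a second-order ODE with a modular parametrization.

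Next, once $P(t;x,b)=\left(bF(q)+q\frac{d}{dq}F(q)\cdot\frac{dq}{dt}\cdot\frac{t}{q}\dots\right)$ is written in the uniformizing variable $q=e^{2\pi i\tau}$, I would use the standard principle that a Ramanujan-type $1/\pi$ formula is equivalent to the statement that a certain weight-two modular form, namely $F^2$ times a logarithmic derivative of the Hauptmodul, is a rational linear combination of an Eisenstein series $E_2$-type object and a derivative, evaluated at a CM point $\tau_0$ where the specified value of $t$ is attained. Concretely: write $\Theta=q\frac{d}{dq}$; then $\Theta t/t$ and $F$ are weight-one and weight-one respectively on the same group, and $\sum a(n)(n+b)t^n$ becomes $\frac{F\cdot\Theta t}{t}\cdot\big(\text{stuff}\big)+bF^2\cdot\big(\text{stuff}\big)$; at a CM point $\tau_0$ the nonholomorphic Eisenstein series $E_2^*(\tau_0)$ is an algebraic multiple of $1/\big(\sqrt{d}\,\pi\cdot(\text{period})^2\big)$, and the period $(\text{period})^2$ is exactly $F(\tau_0)$-related, so the whole sum collapses to an algebraic multiple of $1/\pi$. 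I would carry this out uniformly: (i) identify the group $\Gamma_0(N)$ and Hauptmodul for each of the two families; (ii) express $F$ as an explicit eta-quotient or as a ${}_2F_1$ in the Hauptmodul; (iii) locate, for each listed parameter triple $(t_0;x,b)$, the CM point $\tau_0$ with $t(\tau_0)=t_0$ and compute its discriminant; (iv) plug into the weight-two evaluation and match the algebraic constant $c$.

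For $W$ I expect the representation to be exactly the one appearing in Cooper--Wan--Zudilin's work on Domb-like and Almkvist--Zudilin numbers, so I would cite their level-$N$ parametrization and reduce the $W$ cases to CM evaluations on the corresponding (likely level $6,7,\dots$) curves in the same way; this also explains the claim in the abstract that the method "recovers other cases that were proved by Cooper, Wan and Zudilin." The overall structure is therefore: \textbf{Step 1}, combinatorial-to-hypergeometric reduction of the inner sums; \textbf{Step 2}, hypergeometric-to-modular identification via Clausen and algebraic transformations, producing the relevant modular curve and Hauptmodul; \textbf{Step 3}, the general $1/\pi$ criterion in terms of a weight-two evaluation involving $E_2^*$ at a CM point; \textbf{Step 4}, case-by-case CM-point identification and verification of the algebraic constants.

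The main obstacle, I expect, will be \textbf{Step 2}: showing that these particular Hadamard products and Legendre-type convolutions really are modular, i.e. pinning down the correct algebraic transformation of the hypergeometric variable and the correct congruence subgroup so that $t$ is genuinely a Hauptmodul and $F$ a holomorphic weight-one form. Many $a(n)$ of this shape satisfy a Picard--Fuchs equation of order larger than two, or of order two but without a modular parametrization, and it is exactly the special values of $x$ in Sun's list that make the relevant variety a modular (rather than generic) family. Isolating the algebraic identity that makes this happen — and proving it rather than merely verifying it numerically — is the crux; once that is in hand, \textbf{Steps 3 and 4} are a (lengthy but) routine application of the standard CM-evaluation toolkit, and the transcendence of $\pi$ plus the algebraicity of CM periods guarantees that numerically checking enough digits suffices to make each individual identity rigorous.
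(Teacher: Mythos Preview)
Your overall framework is the right shape, but there is a genuine gap at what you call Step~2, and it propagates into Step~3. You expect that, possibly for special $x$, the generating function $P(t;x)$ (and likewise $W(t;x)$) will be, after an algebraic change of variable, a single $({}_2F_1)^2$ and hence a weight-one modular form in one Hauptmodul, so that the standard one-CM-point $E_2^*$ machinery applies. This does not happen for any of Sun's $x$: the Picard--Fuchs equation in $t$ has order larger than two for every $x$, exactly the obstruction you anticipate. The paper's key step is that $P(t;x)$ nevertheless factors as
\[
P(t;x)={}_2F_1\!\left(\tfrac14,\tfrac14;1;z_+(t;x)\right)\cdot{}_2F_1\!\left(\tfrac14,\tfrac14;1;z_-(t;x)\right)\cdot\frac{1}{\sqrt{1-16tx}},
\]
with two \emph{different} algebraic arguments $z_\pm$ (and similarly $W$ factors as a product of two ${}_2F_1(\tfrac18,\tfrac38;1;\cdot)$ with distinct arguments $f_\pm$). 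Consequently Step~3 must be run with \emph{two} CM points $\tau_+,\tau_-$ satisfying $T(\tau_\pm)=z_\pm$, and the resulting $1/\pi$ formula (the paper's Proposition~2.1) contains the cross ratio $\sqrt{Y(\tau_-)/Y(\tau_+)}$ and a sum of two quasimodular terms $B(\tau_\pm)$, not a single $E_2^*(\tau_0)$ evaluation. What is special about Sun's pairs $(t,x)$ is not that the family becomes modular for those $x$ --- the factorization above holds identically in $t$ and $x$ --- but that \emph{both} $z_+(t;x)$ and $z_-(t;x)$ are simultaneously CM values of the Hauptmodul on $X_0(2)$ (resp.\ $X_0(2)+$). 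Your plan as written would stall looking for a single $\Gamma_0(N)$ on which $t$ is a Hauptmodul and $P(t;x)$ is holomorphic of weight one; no such group exists here.

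A secondary point: your closing claim that ``numerically checking enough digits suffices to make each individual identity rigorous'' is not correct. Once the two-point formula is in hand the equality to be checked is between algebraic numbers, but a numerical match does not prove equality. The paper certifies each CM value ($B(\tau_\pm)$ and $Y(\tau_+)/Y(\tau_-)$) as an explicit root of a modular polynomial built from coset representatives of $\Gamma_0(2)\backslash M_2'(m)$ (resp.\ $\Gamma_0(2)\backslash M_2(m)$); the numerics are used only to select the correct root among finitely many, after which the verification is symbolic.
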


In this work, we shall affirm that

\begin{theorem}\label{main}
    The equalities given in Conjecture~\ref{sunconj} all hold.
\end{theorem}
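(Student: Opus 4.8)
The plan is to derive a single modular identity, valid along a one-parameter family and having all twenty of Sun's evaluations as complex-multiplication specializations, using the Clausen/Ramanujan--Sato circle of ideas.

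\emph{Step 1: modular parameterization.} Fix the rational parameter $x$ and set $\mathcal{P}(t;x):=\sum_{n\ge0}\binom{2n}{n}\bigl(\sum_{k=0}^n\binom{2k}{k}^2\binom{2(n-k)}{n-k}x^{n-k}\bigr)t^n$, so that $P(t;x,b)=(\theta_t+b)\,\mathcal{P}(t;x)$ with $\theta_t=t\,\tfrac{d}{dt}$. The inner sum is a Cauchy product, so
\[
\sum_{n\ge0}\Bigl(\sum_{k=0}^n\binom{2k}{k}^2\binom{2(n-k)}{n-k}x^{n-k}\Bigr)t^n
=\frac{1}{\sqrt{1-4xt}}\;{}_2F_1\!\left(\tfrac12,\tfrac12;1;16t\right),
\]
and multiplying by the outer $\binom{2n}{n}$ is a Hadamard product with $(1-4t)^{-1/2}$; one checks that $\mathcal{P}(t;x)$ then satisfies a linear ODE of modular type --- in the relevant cases, the symmetric square of a second-order operator whose Schwarz map is a modular function. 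Concretely, there are a Hauptmodul $t=t_x(\tau)$ for a genus-zero congruence group and weight-one modular forms $Y_1,Y_2$ with $\mathcal{P}(t_x(\tau);x)=Y_1(\tau)Y_2(\tau)$, and analogously $\mathcal{W}(t_x(\tau);x)=\widetilde Y_1(\tau)\widetilde Y_2(\tau)$ after the corresponding reduction of \eqref{WT} (the factor $\binom nk\binom{n+k}k$ producing a Legendre polynomial). This is the modular parameterization advertised in the abstract; it is proved by exhibiting the group and the relevant eta/theta expressions and then matching finitely many $q$-coefficients, both sides lying in a fixed finite-dimensional space. The discreteness of the admissible $x$ reflects the classical quadratic/cubic ${}_2F_1$-transformations that force this collapse.

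\emph{Step 2: the $1/\pi$ identity.} Under $t=t_x(\tau)$ one has $\theta_t=E^{-1}\theta_q$, where $\theta_q=\tfrac1{2\pi i}\tfrac d{d\tau}$ and $E:=\theta_q\log t_x$ is a weight-two meromorphic modular form, so the Serre derivative $\partial$ (of weight one) gives
\[
P(t_x(\tau);x,b)=\frac1E\Bigl(Y_2\,\partial Y_1+Y_1\,\partial Y_2+\tfrac16 E_2^{*}Y_1Y_2\Bigr)+bY_1Y_2+\frac{Y_1Y_2}{2\pi\,\mathrm{Im}\,\tau\cdot E},
\]
where $E_2^{*}=E_2-\tfrac3{\pi\,\mathrm{Im}\,\tau}$ is the nonholomorphic weight-two Eisenstein series. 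At a CM point $\tau_0$ with $t_x(\tau_0)$ equal to the prescribed rational value, each of $Y_i(\tau_0)$, $\partial Y_i(\tau_0)$, $E_2^{*}(\tau_0)$, $E(\tau_0)$ is an algebraic multiple of the appropriate power of the Chowla--Selberg period; the value of $b$ that makes the weight-two part vanish is then the (necessarily rational) $b(\tau_0)$, and what survives is exactly
\[
P\bigl(t_x(\tau_0);x,b(\tau_0)\bigr)=\frac{Y_1(\tau_0)Y_2(\tau_0)}{2\pi\,\mathrm{Im}(\tau_0)\,E(\tau_0)}=\frac{C}{\pi},
\]
with $C$ algebraic, in fact a rational multiple of a square root of the discriminant, which explains the surds in \eqref{p1}, \eqref{cp5}, \eqref{cp6}--\eqref{cw9}. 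The same computation with $\mathcal{W}$ yields the $W$-formulas.

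\emph{Step 3: matching Sun's list.} It remains to verify that each of the twenty triples $(t,x,b)$ together with its constant $C$ in Conjecture~\ref{sunconj} is of the form $(t_x(\tau_0),x,b(\tau_0),C)$ above: for fixed $x$ the Hauptmodul determines the CM point $\tau_0$ from the rational value of $t$, and $b(\tau_0)$ and $C$ are then read off by evaluating the relevant singular moduli, a finite computation since those quantities satisfy explicit algebraic equations coming from the modular/CM data. One checks this returns precisely Sun's $b$ and $C$ in each case; the same families also reproduce the instances previously established by Cooper, Wan and Zudilin, whence Theorem~\ref{main}. The main obstacle is Step~1: for each $x$ on the list one must pin down the correct genus-zero group and the explicit weight-one forms $Y_1,Y_2$ --- equivalently, prove the hypergeometric collapse that upgrades the naive parameterization to the symmetric-square/weight-two form needed for a $C/\pi$ evaluation --- while also exploiting the evident structure among Sun's data (e.g.\ the $P$-cases largely pairing up, the two reciprocal $t$-values in a pair differing by $64$). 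Once the parameterization is in hand, Steps~2 and 3 are the standard Ramanujan--Sato machinery together with bookkeeping over the twenty cases.
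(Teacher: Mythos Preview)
Your proposal has a genuine conceptual gap in Step~1, and it propagates through the rest. You try to force the problem into the single-Hauptmodul Ramanujan--Sato template: fix $x$, find a genus-zero group with Hauptmodul $t=t_x(\tau)$, and write $\mathcal{P}(t_x(\tau);x)=Y_1(\tau)Y_2(\tau)$ at a \emph{single} point $\tau$. You then diagnose the ``discreteness of the admissible $x$'' as coming from quadratic/cubic ${}_2F_1$-transformations. Neither of these is what actually happens.

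The paper's key lemma is a factorization valid for \emph{all} $x$ (not a discrete set):
\[
P(t;x)=\frac{1}{\sqrt{1-16tx}}\;
{}_2F_1\!\Bigl(\tfrac14,\tfrac14;1;z_+(t;x)\Bigr)\,
{}_2F_1\!\Bigl(\tfrac14,\tfrac14;1;z_-(t;x)\Bigr),
\]
with $z_\pm(t;x)$ the two roots of an explicit quadratic in $t,x$ (and an analogous formula for $W$ with ${}_2F_1(\tfrac18,\tfrac38;1;\cdot)$). The single fixed Hauptmodul $T(\tau)$ for $\Gamma_0(2)$ then parameterizes each factor separately, so a pair $(t,x)$ corresponds to a \emph{pair} of points $\tau_+,\tau_-\in\mathbb{H}$ with $T(\tau_\pm)=z_\pm(t;x)$. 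The $1/\pi$ identity accordingly involves both $\tau_+$ and $\tau_-$, and in particular requires evaluating the ratio $Y(\tau_+)/Y(\tau_-)$ of weight-$2$ forms at two distinct CM points --- a genuinely new ingredient with no analogue in your single-$\tau$ formula. The specific $(t,x)$ in Sun's list are not forced by hypergeometric transformations; they are exactly those for which $z_\pm(t;x)$ are singular moduli of class number $\le 2$, so that $B(\tau_\pm)$, $C(\tau_\pm)$ and $Y(\tau_+)/Y(\tau_-)$ are computable algebraic numbers.

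In short, your ``main obstacle'' --- pinning down an $x$-dependent genus-zero group --- is a red herring: there is one group ($\Gamma_0(2)$, resp.\ $\Gamma_0(2)+$) for all $x$, but two CM points per formula. Without the two-argument hypergeometric decomposition above, your Step~1 does not get off the ground, and the algebraicity argument in Step~2 (which assumes a single $\tau_0$) does not match the actual structure of the problem.
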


\begin{remark}
    It is noteworthy that in recent work \cite{S23}, Sun also derives a number of new series for~$\frac{1}{\pi}$ of the form~$W(t;x,b)$ using binomial transformations.
\end{remark}

In addition to proving Theorem~\ref{main} we will establish modular parameterizations for the generic formulas for~$\frac{1}{\pi}$ arising from~\eqref{PT} and~\eqref{WT} (see Proposition~\ref{T1}), so that the problem boils down to CM evaluations of modular forms. For example, we show that there are functions $z_{\pm}(t;x)$ and $h(t;x)$ and a modular function $T(\tau)$, as well as modular forms $Y(\tau)$ and $C(\tau)$ that can be expressed in terms of $T(\tau)$, such that
\begin{align}\label{Pmod}
    P(t;x,b)=\left(\frac{z_{+}'(t;x) h(t;x)t}{2C(\tau_{+})z_{+}(t;x)}\times\left({\frac{Y(\tau_{-})}{Y(\tau_{+})}}\right)^{\frac{1}{2}}+\frac{z_{-}'(t;x) h(t;x)t}{2C(\tau_{-})z_{-}(t;x)}\times\left({\frac{Y(\tau_{+})}{Y(\tau_{-})}}\right)^{\frac{1}{2}}\right)\frac{1}{\pi},
\end{align}
provided that $P(t;x,b)$ converges,
where $b$ is as in Proposition~\ref{T1} and $\tau_{\pm}$ are points in the upper half plane $\mathbb{H}$ listed in Tables 1--3 for which $T(\tau_{\pm})=z_{\pm}(t;x).$

In addition, one shall also see in Section~\ref{cm} that all of the special cases that were also conjectured by Sun and have been proved by others can be recovered by the method used in this work, and more importantly, all of these formulas are exactly Proposition~\ref{T1} specialized to CM points of class number~1 or~2, for whose corresponding $t$ and $x$, their associated $P(t;x,\cdot)$ converges. For the reader's reference, we display these previously proven cases in Theorem~\ref{main2}.

\begin{theorem}[Cooper--Wan--Zudilin \cite{CWZ}]\label{main2}
    Let $P(t;x,b)$ be as in Conjecture~\ref{sunconj}. Then
    \begin{align}
   % {\bf (see~remark)}~ P\left(-\frac{1}{36};-2,-5,2\right)&=\frac{6(7\sqrt{3}-6)}{\pi},\notag\\
        P\left(\frac{1}{100};6,-2\right)&=\frac{50}{3\pi},\label{p2}\\
         P\left(-\frac{1}{192};-8,0\right)&=\frac{3}{2\pi},\label{p3}\\
         P\left(\frac{1}{256};12,-\frac{1}{6}\right)&=\frac{4\sqrt{3}}{3\pi},\label{p4}\\
          P\left(-\frac{1}{1536};-32,\frac{1}{10}\right)&=\frac{3\sqrt{6}}{10\pi},\label{p5}\\
           P\left(\frac{1}{1600};36,\frac{1}{12}\right)&=\frac{75}{96\pi},\label{p6}\\
            P\left(\frac{1}{3136};-60,\frac{5}{24}\right)&=\frac{49\sqrt{3}}{192\pi},\label{p7}\\
             P\left(-\frac{1}{3072};64,\frac{3}{14}\right)&=\frac{3}{7\pi}.\label{p8}
    \end{align}
\end{theorem}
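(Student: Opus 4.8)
The plan is to obtain Theorem~\ref{main2} as a direct specialization of the modular parameterization in Proposition~\ref{T1}, i.e.\ by the very same machinery used to prove Theorem~\ref{main}, rather than by reproducing the hypergeometric argument of \cite{CWZ}. Concretely, for each of the seven pairs $(t,x)$ appearing in \eqref{p2}--\eqref{p8} one first checks, using Proposition~\ref{T1}, that the displayed value of $b$ is precisely the value forced by the parameterization; this is what allows the identity \eqref{Pmod} to be invoked verbatim for that $(t,x)$. Hence the entire content of each of the seven equalities is the evaluation of the right-hand side of \eqref{Pmod} at the relevant argument, and the proof reduces to computing that value.

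Next I would pin down, for each $(t,x)$, the two points $\tau_{\pm}\in\mathbb{H}$ satisfying $T(\tau_{\pm})=z_{\pm}(t;x)$. The assertion to be verified is that in each of these cases $\tau_{+}$ and $\tau_{-}$ are CM points whose associated orders have class number $1$ or $2$; these are exactly the entries that get recorded in Tables~1--3. Once $\tau_{\pm}$ are identified, the quantities $z_{\pm}'(t;x)$, $h(t;x)$, $Y(\tau_{\pm})$ and $C(\tau_{\pm})$ occurring in \eqref{Pmod} are all values, or $t$-derivatives of values, of modular forms at CM points, hence algebraic up to a common transcendental period. Because the two summands of \eqref{Pmod} carry the reciprocal factors $\big(Y(\tau_{-})/Y(\tau_{+})\big)^{1/2}$ and $\big(Y(\tau_{+})/Y(\tau_{-})\big)^{1/2}$, the weights balance and the periods cancel, so the bracket in \eqref{Pmod} is an explicit algebraic number; evaluating it and matching it with the stated constant (such as $\tfrac{50}{3}$, $\tfrac{3}{2}$, $\tfrac{4\sqrt{3}}{3}$, and so on) finishes that case.

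Two points require care. First, before \eqref{Pmod} may be read as a numerical identity one must check that $P(t;x,b)$ actually converges for each of these $(t,x)$; this uses the smallness of $|t|$ together with the sign pattern of $x$, and it is precisely the convergence hypothesis flagged right after \eqref{Pmod}. Second --- and I expect this to be the main obstacle --- are the CM evaluations themselves: for the class-number-one points the singular values of $T$, $Y$ and $C$ are rational or lie in a real quadratic field and can be read off directly, but for the class-number-two points among \eqref{p2}--\eqref{p8} one must select the correct Galois conjugate of a quadratic singular modulus and track which branch of the square roots $\big(Y(\tau_{-})/Y(\tau_{+})\big)^{1/2}$ is taken, so that the resulting constant emerges with the right sign and the right surd. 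Carrying out this bookkeeping uniformly is what makes Theorem~\ref{main2} drop out of Proposition~\ref{T1}, and it also explains why these particular $(t,x)$ were the cases accessible to earlier methods: they are exactly the CM specializations of smallest class number.
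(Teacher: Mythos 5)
Your proposal follows exactly the paper's own route: Section~3 proves Theorem~\ref{main2} (alongside Theorem~\ref{main}) by specializing Proposition~\ref{T1}, locating the CM points $\tau_{\pm}$ with $T(\tau_{\pm})=z_{\pm}(t;x)$ among discriminants of Hurwitz--Kronecker class number at most $2$, and then evaluating $B(\tau_{\pm})$, $C(\tau_{\pm})$ and $Y(\tau_{+})/Y(\tau_{-})$ via modular polynomials, with the data for \eqref{p2}--\eqref{p8} recorded in Table~\ref{table2}. The steps you flag as delicate (convergence, matching the forced value of $b$, and choosing the correct Galois conjugate and square-root branch) are precisely the bookkeeping the paper carries out case by case, so the proposal is correct and essentially identical in approach.
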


The main strategy of establish modular parameterizations for the Ramanujan type series arising from $P(t;x,b)$ and $W(t;x,b)$, such as~\eqref{Pmod}, is to decompose the generating function of the binomial-like coefficients in terms of local solutions to a Schwarzian differential equation that indeed can be modularly parameterized. To this end, we first 
 display such decompositions in Section~\ref{modpara} followed by the modular parameterizations for the components in these decompositions and prove that these altogether yield general identities interrelating $P(t;x,b)$ or $W(t;x,b)$ with $\frac{1}{\pi}$ via modular forms.  In Section~\ref{cm}, we rigorously determine the CM evaluations of those modular forms that correspond to Sun's formulas, and as a result, validate Conjecture~\ref{sunconj} and re-prove Theorem~\ref{main2} in a uniform way. %At the end, we remark how one may use our general formulas to produce brand new Ramanujan type series with an explicit example.

{\bf Acknowledgment.} The authors thank Professor Zhi-Wei Sun for his useful comments and suggestions. They also thank the referee for their careful reading of the manuscript.

\section{Modular parameterizations}\label{modpara}

In this section, we establish modular parameterizations for the components comprising the decompositions of $P(t;x,0,1)$ and other counterparts obtained in the preceding section, and show how they interplay with~$\frac{1}{\pi}$. As an implication, we relate $P(t;x,b)$ and other counterparts to $\frac{1}{\pi}$ in terms of CM values of modular forms that shall lead us to rigorously verify the equalities in Conjecture~\ref{sunconj}.
We start with a technical but key lemma that facilitates us with finding the corresponding modular parameterizations for $P(t;x,b)$ and $W(t;x,b)$. % can be verified by computer software such as MAPLE.

\begin{lemma}
    \label{decomp}
Let 
$$
P(t;x)=\sum_{n=0}^{\infty}\binom{2n}{n}\left(\sum_{k=0}^{n}\binom{2k}{k}^{2}\binom{2(n-k)}{n-k}x^{n-k}\right)t^{n},
$$
so that $P(t;x,b)$ from Conjecture~\ref{sunconj} equals $t P'(t;x) + b P(t;x)$. Likewise let 
$$
W(t;x)=\sum_{n=0}^{\infty}\left(\sum_{k=0}^{n}\binom{n}{k}\binom{n+k}{k}\binom{2k}{k}\binom{2(n-k)}{n-k}x^{k}\right)t^{n}.
$$
Denote by ${}_{2}F_{1}(a,b;c;z)$ the Gauss hypergeometric function.
%    \begin{enumerate}
%        \item % For $t$ sufficiently close to $0$,
    \iffalse
         \begin{align}
        % &\sum_{n=0}^{\infty}\binom{2n}{n}\left(\sum_{k=0}^{n}\binom{2k}{k}^{2}\binom{2(n-k)}{n-k}x^{n-k}\right)t^{n}\nonumber\\
        P(t;x)
        &= {}_{2}F_{1}\left(\frac{1}{4},\frac{1}{4};1;\frac{128t}{32tx-x+2+\sqrt{x(x-4)(1-64t)}}\right)\\
        & \times  {}_{2}F_{1}\left(\frac{1}{4},\frac{1}{4};1;\frac{128t}{32tx-x+2-\sqrt{x(x-4)(1-64t)}}\right) \times \frac1{\sqrt{1-16tx}}\nonumber.
    \end{align}
\fi
Let
\begin{equation}\label{zpm}
z_{\pm}=z_{\pm}(t;x) = \frac{128t}{32tx-x+2 \pm \sqrt{x(x-4)(1-64t)}},   
\end{equation}
\begin{equation}
    \label{ZF}
    Z_{\pm}(t;x)={}_{2}F_{1}\left(\frac{1}{4},\frac{1}{4};1; z_{\pm}\right).
\end{equation}
\begin{align}\label{fpm}
f_\pm=f_{\pm}(t;x) = \frac{4096t^5 x}{\left(\sqrt{(1-4t)(1-16tx)} \pm \sqrt{(1-4t)^2-16tx}\right)^4}.\end{align}
Then for $t$ sufficiently
close to $0$
\begin{align}\label{pdecomp2}
    P(t;x) = Z_{+}(t;x)\times Z_{-}(t;x) \times \frac1{\sqrt{1-16tx}}
\end{align}
and
    \begin{align}\label{wdecomp}
    % \sum_{n=0}^{\infty}W_{n}(x)t^{n}=
    W(t;x) = 
{}_{2}F_{1}\left(\frac{1}{8},\frac{3}{8};1;f_+ \right) \times
{}_{2}F_{1}\left(\frac{1}{8},\frac{3}{8};1;f_- \right) \times
\frac{1}{ \sqrt{ 1 -4t + 16tx} }. \end{align}

%    \end{enumerate}
\end{lemma}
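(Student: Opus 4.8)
The plan is to prove each identity by producing a linear ordinary differential operator in $t$, over $\mathbb{C}(x)(t)$, that annihilates both sides and has a one-dimensional space of solutions holomorphic at $t=0$; matching the value at $t=0$ then forces the two sides to agree for $t$ near $0$. For the left-hand sides, the summands $\binom{2n}{n}\binom{2k}{k}^{2}\binom{2(n-k)}{n-k}x^{n-k}$ and $\binom{n}{k}\binom{n+k}{k}\binom{2k}{k}\binom{2(n-k)}{n-k}x^{k}$ are hypergeometric terms in the pair $(n,k)$, so Zeilberger's creative telescoping applied to the sums over $k$ returns explicit recurrences in $n$, hence explicit operators $L_{P}$ and $L_{W}$ over $\mathbb{C}(x)(t)$ that kill $P(t;x)$ and $W(t;x)$. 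One checks that each of these has order $3$ and that $t=0$ is a regular singular point carrying a one-dimensional space of holomorphic solutions, $P(t;x)$ (resp. $W(t;x)$) being the one normalized to value $1$ there.

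For the right-hand side of \eqref{pdecomp2}, observe that $z_{+}$ and $z_{-}$ from \eqref{zpm} are the two branches of a quadratic algebraic function $z=z(t;x)$ over $\mathbb{C}(x)(t)$, and that both satisfy $z(0;x)=0$. Hence $Z_{+}$ and $Z_{-}$ from \eqref{ZF} are the holomorphic-at-$0$ solutions of, respectively, the second-order operator $\mathcal{M}$ and its Galois conjugate $\overline{\mathcal{M}}$ obtained by substituting $z=z(t;x)$ into the hypergeometric operator of ${}_{2}F_{1}(\tfrac14,\tfrac14;1;\cdot)$. The product $Z_{+}Z_{-}$ is invariant under the branch swap, so it lies in the three-dimensional $\mathbb{C}(x)(t)$-span of $\{\,Z_{+}Z_{-},\ Z_{+}\widetilde{Z}_{-}+\widetilde{Z}_{+}Z_{-},\ \widetilde{Z}_{+}\widetilde{Z}_{-}\,\}$ (with $\widetilde{Z}_{\pm}$ a second, logarithmic solution), and is therefore annihilated by an explicit third-order operator $N$, computable from $\mathcal{M}$ and $\overline{\mathcal{M}}$ by the usual symmetric-square-type construction together with the explicit rational data of the pullback $z(t;x)$. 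Conjugating $N$ by $(1-16tx)^{1/2}$ (algebraic, holomorphic with value $1$ at $t=0$) yields a third-order operator annihilating $Z_{+}Z_{-}/\sqrt{1-16tx}$, and one verifies that this operator equals $L_{P}$.

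The same scheme proves \eqref{wdecomp}: $f_{+}$ and $f_{-}$ from \eqref{fpm} are the branches of a quadratic algebraic function $f=f(t;x)$ with $f(0;x)=0$, the relevant second-order operator is the hypergeometric operator of ${}_{2}F_{1}(\tfrac18,\tfrac38;1;\cdot)$ pulled back along $f(t;x)$, and the gauge factor is $(1-4t+16tx)^{1/2}$; the resulting third-order operator is then checked to equal $L_{W}$. To conclude, each side of \eqref{pdecomp2} is holomorphic at $t=0$ with value $1$ --- the left side has constant term $\binom{0}{0}\binom{0}{0}=1$, and $z_{\pm}(0;x)=0$ forces $Z_{\pm}(0;x)=1$ --- and both are killed by $L_{P}$; since the holomorphic solution space of $L_{P}$ at $t=0$ is one-dimensional, the two sides coincide for small $t$, and similarly for \eqref{wdecomp}.

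The crux is the identification carried out in the last two paragraphs: that the holonomic operator produced by creative telescoping is, up to the prescribed algebraic gauge factor, the symmetric square of a hypergeometric operator pulled back along the prescribed quadratic algebraic map --- equivalently, that $L_{P}$ (resp. $L_{W}$) becomes solvable in terms of ${}_{2}F_{1}(\tfrac14,\tfrac14;1;\cdot)$ (resp. ${}_{2}F_{1}(\tfrac18,\tfrac38;1;\cdot)$) after the substitution $z=z(t;x)$ (resp. $f=f(t;x)$). Since the pullbacks and the Schwarzian data are already given explicitly in \eqref{zpm}, \eqref{ZF} and \eqref{fpm}, what remains --- the symmetric-square computation, the algebraic gauge twist, the check that the resulting operator matches the one produced by creative telescoping, and the comparison of initial data at $t=0$ --- is routine if lengthy, and is most conveniently carried out with a computer algebra system.
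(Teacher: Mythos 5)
Your proposal is correct and follows essentially the same route as the paper: creative telescoping yields a provably correct recurrence, hence a differential operator annihilating $P(t;x)$ (resp.\ $W(t;x)$), and the closed form is shown to satisfy the same operator with matching initial data at $t=0$. You actually supply more detail than the published proof (the symmetric-square/pullback construction for the right-hand side and the one-dimensionality of the holomorphic solution space at $t=0$, versus the paper's ``matches sufficiently many initial conditions''), but the argument is the same.
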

\begin{proof}
The computation that finds and proves these formulas is very similar to a computation in \cite{saga}, we will give a brief summary. With standard telescoping techniques, one can compute a provably correct recurrence relation for the coefficients of $P(t;x)$. One can convert this recurrence into a differential equation for $P(t;x)$. When a solution~(\ref{pdecomp2}) of that differential equation matches sufficiently many initial conditions, then it must equal $P(t;x)$ for all $t$ sufficiently small.
\end{proof}

Lemma~\ref{decomp} indicates that modular parameterizations for $P(t;x,b)$ and $W(t;x,b)$ can be realized via that for the hypergeometric functions ${}_{2}F_{1}\left(\frac{1}{4},\frac{1}{4};1;\cdot\right)$ and ${}_{2}F_{1}\left(\frac{1}{8},\frac{3}{8};1;\cdot\right)$
and their derivatives.
\begin{remark}
With ${}_{2}F_{1}\left(\frac{1}{4},\frac{1}{4};1; z \right)
={}_{2}F_{1}(\frac{1}{2},\frac{1}{2};1; \frac12-\frac12 \sqrt{1-z})$ we can rewrite (\ref{ZF}) as
\[ Z_{\pm}(t ; x) = {}_{2}F_{1}\left(\frac{1}{2},\frac{1}{2};1;\frac{1}{2} - \frac{1}{2} \frac{ \sqrt{1 - 64t} \pm 16t \sqrt{x^2 - 4x} }{ 1 - 16tx }\right) \]
for $t$ sufficiently small.
The left side of~(\ref{cp1}) is $t P'(t;x) + b P(t;x)$
evaluated at $t,x,b = \frac{1}{100}, \frac94, \frac{1}{12}$. Applying that to~(\ref{pdecomp2})
and replacing ${}_{2}F_{1}\left(\frac{1}{2},\frac{1}{2};1;\cdot \right) = \frac{2}{\pi} K(\sqrt{\cdot})$ this
becomes
\[
\frac{25}{6 \pi^2} \left( E(a_+) K(a_-) + E(a_-) K(a_+) - \frac32 K(a_+)K(a_-) \right)
\]
where $K$ and $E$ are the elliptic $K$ and $E$ functions, and
$a_{\pm}:=({3} \pm \sqrt{-7})/8$. Proving (\ref{cp1}) means
proving that this is $\frac{75}{48\pi}$, which is not trivial. To obtain Theorem~\ref{main} from (\ref{pdecomp2}),(\ref{wdecomp}) we will use modular functions and evaluation at CM points.
\end{remark}

\begin{lemma}
\label{TT}
%\begin{enumerate}
    % \item
    If $T=T(\tau)$ is defined by
    \begin{align}
        \label{ttau}
        T(\tau)=-64\frac{\eta(2\tau)^{24}}{\eta(\tau)^{24}}
    \end{align}
    then $T(\tau)$ is a biholomophic map from $X_{0}(2)$ to $\mathbb{P}^{1}(\mathbb{C})$, and if $|T(\tau)|<1$ then
    \begin{align}
        \label{Fttau}
        {}_{2}F_{1}\left(\frac{1}{4},\frac{1}{4};1;T(\tau)\right)=\frac{1}{T^{\frac{1}{2}}(1-T)^{\frac{1}{4}}}\left(\frac{1}{2\pi i}\frac{dT}{d\tau}\right)^{\frac{1}{2}}.
    \end{align}
     % \item
If $T=T(\tau)$ is defined by
    \begin{align}
        \label{ttau2}
        T(\tau)=\frac{256\eta(\tau)^{24}\eta(2\tau)^{24}}{\left(64\eta(2\tau)^{24}+\eta(\tau)^{24}\right)^{2}}
    \end{align}
    then $T(\tau)$ is a biholomorphic map from $X_{0}(2)+$ to $\mathbb{P}^{1}(\mathbb{C})$, and if $|T(\tau)|<1$ then
    \begin{align}
        \label{Fttau2}
        {}_{2}F_{1}\left(\frac{1}{8},\frac{3}{8};1;T(\tau)\right)=\frac{1}{T^{\frac{1}{2}}(1-T)^{\frac{1}{4}}}\left(\frac{1}{2\pi i}\frac{dT}{d\tau}\right)^{\frac{1}{2}}.
    \end{align}
% \end{enumerate}
\end{lemma}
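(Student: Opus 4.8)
The two statements have the same structure, so I will outline the argument for the first (the assertions about \eqref{ttau} and \eqref{Fttau}) and note the parallel modifications for the second. The plan is to split the claim into two essentially independent parts: that $T$ is a Hauptmodul, i.e.\ induces a degree-one map $X_0(2)\to\mathbb{P}^1(\mathbb{C})$ (resp.\ $X_0(2)+\to\mathbb{P}^1(\mathbb{C})$ for \eqref{ttau2}), and the period identity \eqref{Fttau} (resp.\ \eqref{Fttau2}).

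For the Hauptmodul part, the $\eta$-transformation law shows $\eta(2\tau)^{24}/\eta(\tau)^{24}$ is $\Gamma_0(2)$-invariant, and as $\eta$ is zero-free on $\mathbb{H}$ the only zeros and poles of $T$ lie at the two cusps; from $\eta(2\tau)^{24}/\eta(\tau)^{24}=q\prod_{n\ge1}(1-q^{2n})^{24}/(1-q^{n})^{24}=q+O(q^2)$ one gets a simple zero at the cusp $\infty$, and the Fricke involution $\tau\mapsto-1/(2\tau)$ (under which $\eta(2\tau)^{24}/\eta(\tau)^{24}\mapsto 2^{-12}\eta(\tau)^{24}/\eta(2\tau)^{24}$, i.e.\ $T\mapsto 1/T$) swaps the cusps and hence gives a simple pole at the cusp $0$; so $\deg T=1$, and a degree-one holomorphic map of compact Riemann surfaces is an isomorphism. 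For \eqref{ttau2} I would rewrite $T=256\,t/(t+64)^2$ with $t=(\eta(\tau)/\eta(2\tau))^{24}$ the standard Hauptmodul of $X_0(2)$ (so also $1-T=(t-64)^2/(t+64)^2$), observe that $t\mapsto 2^{12}/t$ under the Atkin--Lehner involution $W_2$ forces $T\circ W_2=T$, so $T$ descends to $X_0(2)+$, and note that $t\mapsto 256t/(t+64)^2$ has degree two with nontrivial deck transformation exactly $W_2$, so $T$ has degree one on $X_0(2)+$.

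For the period identity the crucial point is a coincidence of local monodromy data. The hypergeometric equation of ${}_2F_1(\tfrac14,\tfrac14;1;z)$ has exponent differences $0,\tfrac12,0$ at $0,1,\infty$, which is exactly the signature of $\Gamma_0(2)$ (two cusps, one order-$2$ elliptic point); that of ${}_2F_1(\tfrac18,\tfrac38;1;z)$ has exponent differences $0,\tfrac12,\tfrac14$, matching the signature $(0;2,4,\infty)$ of $\Gamma_0(2)+$. Hence the Schwarz map $s(z)=\frac{1}{2\pi i}F_2(z)/F_1(z)$, the ratio of a logarithmic solution by the holomorphic solution $F_1={}_2F_1(\tfrac14,\tfrac14;1;z)$ near $z=0$, uniformizes the corresponding thrice-punctured genus-zero curve and is therefore a branch of the inverse of a Hauptmodul; comparing the ramification values ($T=0,\infty$ at the cusps, $T=1$ at the elliptic point---visible from the zeros of $T$ and $1-T$) identifies that Hauptmodul with our $T$ up to a M\"obius map, and the residual freedom is removed by matching leading $q$-expansions. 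Equivalently, this is the Schwarzian identity $\{T,\tau\}=-(\tfrac{dT}{d\tau})^{2}\bigl(\tfrac{1}{2T^{2}}+\tfrac{3}{8(1-T)^{2}}-\tfrac{3}{8T(1-T)}\bigr)$, with the obvious analogue for \eqref{Fttau2}. Granting that $s$ inverts $T$, differentiate $s(T(\tau))=\tau$ and use the Wronskian $F_1F_2'-F_1'F_2=c\,z^{-1}(1-z)^{-1/2}$ of the hypergeometric equation (the exponent $c-a-b-1=-\tfrac12$ is the same for both of our equations) to get $\frac{1}{2\pi i}\frac{dT}{d\tau}=c^{-1}F_1(T)^2\,T(1-T)^{1/2}$; taking the square root and observing that both sides equal $1+O(q)$ as $\tau\to i\infty$ (since $T=-64q+O(q^2)$, resp.\ $256q+O(q^2)$, so the apparent singularity of $T^{-1/2}$ is cancelled by $(\frac{1}{2\pi i}\frac{dT}{d\tau})^{1/2}$ and $(1-T)^{-1/4}\to1$) fixes $c=1$ and the square-root branches, yielding \eqref{Fttau} and \eqref{Fttau2}.

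The main obstacle is exactly this last step: justifying that the hypergeometric Schwarz map inverts the explicit $\eta$-quotient---equivalently the Schwarzian identity above---together with consistent bookkeeping of square-root branches ($T$ is negative near $i\infty$, so several factors are purely imaginary and must be signed compatibly). Three routes are available. One may invoke the classical identification of $\Gamma_0(2)$ and $\Gamma_0(2)+$ with the arithmetic triangle groups of types $(2,\infty,\infty)$ and $(2,4,\infty)$ (the signature plus a covolume count pins down the group uniquely), so that the uniformizing differential equation of each curve is the relevant hypergeometric one. Alternatively, one may compute $\{T,\tau\}$ directly from \eqref{ttau}, \eqref{ttau2} via $\frac{1}{2\pi i}(\log\eta)'=\tfrac{1}{24}E_2$ and the standard relations among $E_2(\tau),E_2(2\tau),E_4,E_6$ on $\Gamma_0(2)$, and verify it equals the prescribed rational function of $T$. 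Finally---and most economically, once the second-order equation is known---both sides of \eqref{Fttau} are single-valued and holomorphic near $q=0$ and solve the same pulled-back hypergeometric equation, whose singular point at the cusp has a one-dimensional space of holomorphic solutions (exponents $0,0$), so equality of their constant terms already forces \eqref{Fttau}, and likewise \eqref{Fttau2}. For \eqref{Fttau} one can also reduce to classical $\theta$-constant theory through the quadratic transformation ${}_2F_1(\tfrac14,\tfrac14;1;z)={}_2F_1(\tfrac12,\tfrac12;1;\tfrac12-\tfrac12\sqrt{1-z})$, at the cost of tracking which $\Gamma(2)$-conjugate of the modular lambda function appears.
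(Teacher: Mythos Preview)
Your proposal is correct and follows essentially the same route as the paper. The paper dismisses the Hauptmodul assertion as ``routine'' and then, for the period identity, computes the Schwarzian derivative of $\tau$ with respect to $T$ to obtain the projective normal form $y''+\frac{4T^{2}-5T+4}{16T^{2}(T-1)^{2}}y=0$, identifies this as the projective normal form of the hypergeometric equation for ${}_{2}F_{1}(\tfrac14,\tfrac14;1;T)$, and matches the unique holomorphic local solution at $T=0$---this is precisely your ``second route'' (direct Schwarzian computation) combined with your ``third route'' (one-dimensional holomorphic solution space at the regular singular point forces equality after normalizing the leading term). Your write-up supplies the Hauptmodul argument that the paper omits and frames the same Schwarzian identity more conceptually through the triangle-group signature and the Schwarz map, but the underlying mechanism is identical.
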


\begin{proof}
    It is routine to check that the functions $T(\tau)$ are biholomorphic maps defined in their corresponding modular curves.
    The equations~\eqref{Fttau} and~\eqref{Fttau2} basically come from the Fuchsian equations associated to $T(\tau)$. We illustrate this by justifying the case~(1). 

Using the fact that $T(\tau)$ is a biholomorphic map defined in $X_{0}(2)$, one can compute and show that the Schwarzian derivative
$$
R(T)=\frac{3(\tau^{(2)})^{2}-2\tau^{(3)}\tau^{(1)}}{4(\tau^{(1)})^{2}}=\frac{4T^2-5T+4}{16T^2(T-1)^2},
$$
where $\tau^{(n)}=\frac{d^{n}\tau}{dT^{n}}$, and the associated Fuchsian equation is thus given by
\begin{equation}
    \label{fuchs2}
    \frac{d^{2}y}{dT^{2}}+\frac{4T^2-5T+4}{16T^2(T-1)^2}y=0,
\end{equation}
whose non-logarithmic solutions are exactly the scalar multiples of $\left(\frac{1}{(d\tau/dT)}\right)^{\frac{1}{2}}$. Further, one can check that~\eqref{fuchs2} is the projective normal form of 
\begin{equation}
    \label{projfuchs}
    \frac{d^{2}y}{dT^{2}}+\frac{3T-2}{2T(T-1)}\frac{dy}{dT}+\frac{1}{16T(T-1)}y=0,
\end{equation}
    so that the non-logarithmic solutions of~\eqref{projfuchs} are scalar multiples of $\frac{1}{T^{\frac{1}{2}}(1-T)^{\frac{1}{4}}}\left(\frac{1}{(d\tau/dT)}\right)^{\frac{1}{2}}$ which is holomorphic at $T=0$. On the other hand, a local solution of~\eqref{projfuchs} at $T=0$ can be shown to be the hypergeometric series ${}_{2}F_{1}\left(\frac{1}{4},\frac{1}{4};1;T\right)$. Comparing the leading coefficient at $T=0$, one finds that
    $$
    {}_{2}F_{1}\left(\frac{1}{4},\frac{1}{4};1;T\right)=\frac{1}{T^{\frac{1}{2}}(1-T)^{\frac{1}{4}}}\left(\frac{1}{(d\tau/dT)}\right)^{\frac{1}{2}}.
    $$
    Finally, invert by $T=T(\tau)$ to get~\eqref{Fttau}.
\end{proof}

The functions ${}_{2}F_{1}\left(\frac{1}{4},\frac{1}{4};1;T\right)$ and ${}_{2}F_{1}\left(\frac{1}{8},\frac{3}{8};1;T\right)$ can be extended to $T \in \mathbb{C} - [1,\infty)$ via analytic continuation.
When $T=T(\tau)$, as functions in~$\tau$, they are modular forms of weight~$1$. Following the preceding lemma, we next show how these functions interplay with~$\frac{1}{\pi}$.

\begin{lemma}\label{F'F}
%\begin{enumerate}
%    \item 
Let $T=T(\tau)$ be as in~\eqref{ttau}. Then 
    \begin{align}
       & 2\cdot{}_{2}F_{1}\left(\frac{1}{4},\frac{1}{4};1;T(\tau)\right){}_{2}F_{1}'\left(\frac{1}{4},\frac{1}{4};1;T(\tau)\right)
        =\frac{1}{C(\tau)T(\tau)\pi}+\frac{B(\tau)}{T(\tau)}{}_{2}F_{1}\left(\frac{1}{4},\frac{1}{4};1;T(\tau)\right)^{2}\label{FBC}
    \end{align}
    where
    \begin{align}
    Y(\tau)&=\frac{1}{T(\tau)\left(1-T(\tau)\right)^{\frac{1}{2}}}\frac{1}{2\pi i}\frac{dT}{d\tau}\label{YBC1},\\
    B(\tau)&=\frac{1}{(1-T(\tau))^{\frac{1}{2}}Y(\tau)^{2}}\left(\frac{1}{2\pi i}\frac{dY}{d\tau}-\frac{Y(\tau)}{2\pi{\rm Im}(\tau)}\right)\label{YBC2},
    \\ 
    C(\tau)&=2{\rm Im}(\tau)(1-T(\tau))^{\frac{1}{2}}\label{YBC3}.
%t(\tau)&=\frac{108\eta(\tau)^{12}\eta(3\tau)^{12}}{(\eta(\tau)^{12}+27\eta(3\tau)^{12})^{2}}
\end{align}
Now let $T=T(\tau)$ be as in~\eqref{ttau2}. Then
 \begin{align*}
    & 2\cdot{}_{2}F_{1}\left(\frac{1}{8},\frac{3}{8};1;T(\tau)\right){}_{2}F_{1}'\left(\frac{1}{8},\frac{3}{8};1;T(\tau)\right)
        =\frac{1}{C(\tau)T(\tau)\pi}+\frac{B(\tau)}{T(\tau)}{}_{2}F_{1}\left(\frac{1}{8},\frac{3}{8};1;T(\tau)\right)^{2}\label{FBC2}
    \end{align*}
where $Y(\tau),B(\tau),$ and $C(\tau)$ are again described in equations \eqref{YBC1}--\eqref{YBC3}.
%t(\tau)&=\frac{108\eta(\tau)^{12}\eta(3\tau)^{12}}{(\eta(\tau)^{12}+27\eta(3\tau)^{12})^{2}}

% \end{enumerate}
\end{lemma}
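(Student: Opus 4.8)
The plan is to derive the two Clausen-type identities in Lemma~\ref{F'F} by differentiating the modular parameterization from Lemma~\ref{TT} and carefully tracking the non-holomorphic correction that enters when one differentiates with respect to $\tau$ rather than with respect to $T$. Write $F(\tau) = {}_{2}F_{1}(\tfrac14,\tfrac14;1;T(\tau))$. By \eqref{Fttau} we have $F(\tau) = T^{-1/2}(1-T)^{-1/4}\bigl(\tfrac{1}{2\pi i}\tfrac{dT}{d\tau}\bigr)^{1/2}$, so $F(\tau)^2 (1-T)^{1/2} = \tfrac{1}{T}\cdot\tfrac{1}{2\pi i}\tfrac{dT}{d\tau} = Y(\tau)$ by the definition \eqref{YBC1}. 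This is the key bridge: the \emph{product} $F^2$ is, up to the algebraic factor $(1-T)^{-1/2}$, exactly the weight-$2$ modular-ish object $Y(\tau)$ whose logarithmic derivative is computable.

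First I would take $\tfrac{1}{2\pi i}\tfrac{d}{d\tau}$ of the identity $Y(\tau) = (1-T)^{1/2}F(\tau)^2$. On the left this produces $\tfrac{1}{2\pi i}\tfrac{dY}{d\tau}$; I then rewrite this using \eqref{YBC2} as $\tfrac{1}{2\pi i}\tfrac{dY}{d\tau} = (1-T)^{1/2}Y B + \tfrac{Y}{2\pi\,\mathrm{Im}(\tau)}$, i.e. the defining relation for $B(\tau)$ is precisely the statement that the ``holomorphic part'' of the logarithmic derivative of $Y$ equals $(1-T)^{1/2}B$. On the right, the product rule gives $\tfrac{1}{2\pi i}\tfrac{dT}{d\tau}\cdot\bigl(-\tfrac12(1-T)^{-1/2}\bigr)F^2 + (1-T)^{1/2}\cdot 2 F F'(T)\cdot\tfrac{1}{2\pi i}\tfrac{dT}{d\tau}$, where $F'$ denotes ${}_2F_1'(\tfrac14,\tfrac14;1;\cdot)$ evaluated at $T(\tau)$. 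Now substitute $\tfrac{1}{2\pi i}\tfrac{dT}{d\tau} = T(1-T)^{1/2}Y$ from \eqref{YBC1} and, where needed, $Y = (1-T)^{1/2}F^2$, and divide through by the common factor to isolate $2 F F'(T)$. Matching the two sides and solving for $2FF'$, the term $\tfrac{Y}{2\pi\,\mathrm{Im}(\tau)}$ produced on the left turns into the $\tfrac{1}{C(\tau)T(\tau)\pi}$ term on the right once one recognizes $C(\tau) = 2\,\mathrm{Im}(\tau)(1-T)^{1/2}$ and uses $Y = (1-T)^{1/2}F^2$ together with $T F^2 = $ (the relevant algebraic combination) to clear denominators; the $(1-T)^{1/2}YB$ term becomes $\tfrac{B}{T}F^2$. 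The bookkeeping of the $(1-T)$ powers and the factors of $T$ is the only real content, and it is designed so that \eqref{FBC} comes out exactly.

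The second identity, for ${}_{2}F_{1}(\tfrac18,\tfrac38;1;T(\tau))$ with $T$ as in \eqref{ttau2}, is proved by the identical argument: \eqref{Fttau2} has the same shape $T^{-1/2}(1-T)^{-1/4}\bigl(\tfrac{1}{2\pi i}\tfrac{dT}{d\tau}\bigr)^{1/2}$, so setting $G(\tau) = {}_{2}F_{1}(\tfrac18,\tfrac38;1;T(\tau))$ one again has $(1-T)^{1/2}G^2 = Y(\tau)$ with the \emph{same} formulas \eqref{YBC1}--\eqref{YBC3} (now with the new $T$), and differentiating reproduces the displayed identity verbatim. In other words, both statements are instances of one lemma about any biholomorphic modular function $T$ whose associated ${}_2F_1$ admits the parameterization of Lemma~\ref{TT}; the hypergeometric parameters $(\tfrac14,\tfrac14)$ versus $(\tfrac18,\tfrac38)$ never enter the computation past \eqref{Fttau}/\eqref{Fttau2}.

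The main obstacle is the correct handling of the non-holomorphic piece. The quantity $Y(\tau)$ is a genuine weight-$2$ meromorphic modular form, so $\tfrac{1}{2\pi i}\tfrac{dY}{d\tau}$ is not modular; the standard fix is to use the Serre/Maass derivative $\vartheta_2 Y = \tfrac{1}{2\pi i}\tfrac{dY}{d\tau} - \tfrac{2}{4\pi\,\mathrm{Im}(\tau)}Y$, and the definition \eqref{YBC2} of $B$ is exactly $(1-T)^{1/2}B = Y^{-2}\bigl(\tfrac{1}{2\pi i}\tfrac{dY}{d\tau} - \tfrac{Y}{2\pi\,\mathrm{Im}(\tau)}\bigr)$, which extracts the weight-$4$ modular object $\vartheta_2 Y / Y^2$ divided by the algebraic factor. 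I would want to double-check that the $\tfrac{1}{2\pi\,\mathrm{Im}(\tau)}$ (rather than $\tfrac{1}{4\pi\,\mathrm{Im}(\tau)}$ or $\tfrac{1}{\pi\,\mathrm{Im}(\tau)}$) is the normalization that makes $\tfrac{1}{C(\tau)T(\tau)\pi}$ come out with coefficient exactly $1$: this amounts to verifying that ${}_2F_1' = \tfrac{d}{dz}{}_2F_1$ with the chain rule $\tfrac{d}{d\tau}F(T(\tau)) = F'(T(\tau))\tfrac{dT}{d\tau}$ and that the $2\pi i$'s from the definitions \eqref{YBC1}--\eqref{YBC2} cancel against the $2\pi i$ one introduces when differentiating. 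Once that constant is pinned down, the rest is forced.
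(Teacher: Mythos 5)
Your overall strategy is exactly the paper's: express $Y$ in terms of ${}_2F_1^2$ via Lemma~\ref{TT}, differentiate, split $\frac{1}{2\pi i}\frac{dY}{d\tau}$ into the Maass-derivative piece (which is $(1-T)^{1/2}Y^2B$ by \eqref{YBC2}) plus $\frac{Y}{2\pi\,\mathrm{Im}(\tau)}$, and identify the latter with the $\frac{1}{C(\tau)T(\tau)\pi}$ term. However, your ``key bridge'' is wrong: squaring \eqref{Fttau} gives $F^2=T^{-1}(1-T)^{-1/2}\,\frac{1}{2\pi i}\frac{dT}{d\tau}$, which by \eqref{YBC1} is $Y(\tau)$ itself, i.e. $Y=F^2$, not $Y=(1-T)^{1/2}F^2$. (You correctly computed $(1-T)^{1/2}F^2=\frac{1}{T}\frac{1}{2\pi i}\frac{dT}{d\tau}$ but then misidentified that quantity with $Y$, which carries an extra $(1-T)^{-1/2}$.) This is not a harmless normalization: if you differentiate $Y=(1-T)^{1/2}F^2$ as you propose, the product rule produces the spurious term $-\frac12(1-T)^{-1/2}F^2\,\frac{1}{2\pi i}\frac{dT}{d\tau}$, and after substituting $\frac{1}{2\pi i}\frac{dT}{d\tau}=T(1-T)^{1/2}Y$ and clearing factors you land on
\[
2FF'=\frac{F^2}{2(1-T)}+\frac{B}{T}F^2+\frac{1}{C\,T\,(1-T)^{1/2}\pi},
\]
which has an extra summand and a wrong power of $(1-T)$ in the $1/\pi$ term, so it does not reduce to \eqref{FBC}. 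With the corrected relation $Y=F^2$ the computation is shorter and forced: $\frac{1}{2\pi i}\frac{dY}{d\tau}=2FF'\cdot T(1-T)^{1/2}Y$, equate to $(1-T)^{1/2}Y^2B+\frac{Y}{2\pi\,\mathrm{Im}(\tau)}$, divide by $T(1-T)^{1/2}Y$, and \eqref{FBC} drops out. (You also wrote $(1-T)^{1/2}YB$ for the Maass piece where \eqref{YBC2} gives $(1-T)^{1/2}Y^2B$; that is a second slip of the same bookkeeping kind.) Your observation that the $(\frac18,\frac38)$ case is verbatim identical once \eqref{Fttau2} is in hand is correct and is how the paper treats it.
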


\begin{proof}
    These follow from the interplay between the modularity of ${}_{2}F_{1}\left(\frac{1}{4},\frac{1}{4};1;T(\tau)\right)$, modular forms, the Maass operator and~$\frac{1}{\pi}$. Similarly, we illustrate with the case~(1). 

Start with the simple relation
\begin{equation}\label{dydtau}
    Y(\tau)\frac{1}{2\pi i}\frac{dY}{d\tau}=Y(\tau)\left(\frac{1}{2\pi i}\frac{dY}{d\tau}-\frac{Y(\tau)}{2\pi{\rm Im}(\tau)}\right)+\frac{Y(\tau)^2}{2\pi{\rm Im}(\tau)}.
\end{equation}
Note by definition and the part (1) of Lemma~\ref{TT} that
$$
Y(\tau)={}_{2}F_{1}\left(\frac{1}{4},\frac{1}{4};1;T(\tau)\right)^{2},
$$
so that
\begin{align}
    Y(\tau)\frac{1}{2\pi i}\frac{dY}{d\tau}&=Y(\tau)\cdot 2\cdot{}_{2}F_{1}\left(\frac{1}{4},\frac{1}{4};1;T(\tau)\right){}_{2}F_{1}'\left(\frac{1}{4},\frac{1}{4};1;T(\tau)\right)\frac{1}{2\pi i}\frac{dT}{d\tau}\nonumber\\
    &=2\cdot{}_{2}F_{1}\left(\frac{1}{4},\frac{1}{4};1;T(\tau)\right){}_{2}F_{1}'\left(\frac{1}{4},\frac{1}{4};1;T(\tau)\right)T(\tau)(1-T(\tau))^{\frac{1}{2}}Y(\tau)^{2}\label{yyy}.
\end{align}
Then substituting~\eqref{yyy} into~\eqref{dydtau}, replacing the leading $Y(\tau)$ in the first term on the right hand side of~\eqref{dydtau} with ${}_{2}F_{1}\left(\frac{1}{4},\frac{1}{4};1;T(\tau)\right)^{2}$ and dividing both sides by $T(\tau)(1-T(\tau))^{\frac{1}{2}}Y(\tau)^{2}$, one obtains the desired modular identity~\eqref{FBC}.

\end{proof}

We are in a position to state and prove the modular interrelationship between $P(t;x,b)$ or other counterparts, and $\frac{1}{\pi}$ that in some general sense can interpret the modularity of Sun's formulas.

\begin{proposition}\label{T1}
%\textcolor{red}{We can use $z_+$ and $z_-$ from Lemma 2.1, just have to replace $T$ with $t$.}

%\begin{enumerate}
%    \item 

 Let $z_{\pm}(t;x)$ and $T(\tau)$ be respectively defined as in~\eqref{zpm} and~\eqref{ttau}, and let
    \begin{align*}
 %   z_{+}(t;x)&=\frac{128T}{32Tx-x+2+\sqrt{x(x-4)(1-64T)}},\\
  %  z_{-}(t;x)&=\frac{128T}{32Tx-x+2-\sqrt{x(x-4)(1-64T)}},\\
    h(t;x)&=\frac{1}{\sqrt{1-16tx}}.
\end{align*}
For given $t$ and $x$ such that~\eqref{pdecomp2} holds, suppose that $\tau_{\pm}$ are the points in $\mathbb{H}$ such that $T(\tau_{\pm})=z_{\pm}(t;x)$. Then
\begin{align}\label{Key1}
    &P(t;x,b)=\left(\frac{z_{+}'(t;x) h(t;x)t}{2C(\tau_{+})z_{+}(t;x)}\times\left({\frac{Y(\tau_{-})}{Y(\tau_{+})}}\right)^{\frac{1}{2}}+\frac{z_{-}'(t;x) h(t;x)t}{2C(\tau_{-})z_{-}(t;x)}\times\left({\frac{Y(\tau_{+})}{Y(\tau_{-})}}\right)^{\frac{1}{2}}\right)\frac{1}{\pi}
\end{align}
if we let
 \begin{align*}
   b=-\frac{1}{h(t;x)}\left(\frac{B(\tau_{+})z_{+}'(t;x)h(t;x)t}{2z_{+}(t;x)}+\frac{B(\tau_{-})z_{-}'(t;x)h(t;x)t}{2z_{-}(t;x)}+h'(t;x)t\right).
    \end{align*}
Here, $Y(\tau),B(\tau),$ and $C(\tau)$ are as described in equations \eqref{YBC1}--\eqref{YBC3}.

% \item 

 Likewise, let $f_{\pm}(t;x)$ and $T(\tau)$ be respectively defined as in~\eqref{fpm} and~\eqref{ttau2}, and let
    \begin{align*}
 %   z_{+}(t;x)&=\frac{128T}{32Tx-x+2+\sqrt{x(x-4)(1-64T)}},\\
  %  z_{-}(t;x)&=\frac{128T}{32Tx-x+2-\sqrt{x(x-4)(1-64T)}},\\
    h(t;x)&=\frac{1}{\sqrt{1-4t+16tx}}.
\end{align*}
For given $t$ and $x$ such that~\eqref{wdecomp} holds, suppose that $\tau_{\pm}$ are the points in $\mathbb{H}$ such that $T(\tau_{\pm})=f_{\pm}(t;x)$. Then
\begin{align}\label{Key2}
    &W(t;x,b)=\left(\frac{f_{+}'(t;x) h(t;x)t}{2C(\tau_{+})f_{+}(t;x)}\times\left({\frac{Y(\tau_{-})}{Y(\tau_{+})}}\right)^{\frac{1}{2}}+\frac{f_{-}'(t;x) h(t;x)t}{2C(\tau_{-})f_{-}(t;x)}\times\left({\frac{Y(\tau_{+})}{Y(\tau_{-})}}\right)^{\frac{1}{2}}\right)\frac{1}{\pi}
\end{align}
where
 \begin{align*}
    b&=-\frac{1}{h(t;x)}\left(\frac{B(\tau_{+})f_{+}'(t;x)h(t;x)t}{2f_{+}(t;x)}+\frac{B(\tau_{-})f_{-}'(t;x)h(t;x)t}{2f_{-}(t;x)}+h'(t;x)t\right).
    \end{align*}
    Here, $Y(\tau),B(\tau),$ and $C(\tau)$ are as in equations \eqref{YBC1}--\eqref{YBC3}.
% \end{enumerate}

\end{proposition}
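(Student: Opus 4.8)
The plan is to prove the two identities \eqref{Key1} and \eqref{Key2} in parallel, since both follow from the same template: substitute the hypergeometric decomposition of Lemma~\ref{decomp}, differentiate, and rewrite the product-of-derivatives using Lemma~\ref{F'F}. I will spell out the case of $P(t;x,b)$; the case of $W(t;x,b)$ is verbatim the same argument with $z_\pm$ replaced by $f_\pm$, the parameters $(\tfrac14,\tfrac14)$ replaced by $(\tfrac18,\tfrac38)$, and $h(t;x)=1/\sqrt{1-16tx}$ replaced by $h(t;x)=1/\sqrt{1-4t+16tx}$.

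First I would write, using \eqref{pdecomp2} and the abbreviation $F(z)={}_2F_1(\tfrac14,\tfrac14;1;z)$,
\[
P(t;x) = F(z_+)\,F(z_-)\,h(t;x),
\]
and then differentiate in $t$ and multiply by $t$:
\[
t P'(t;x) = t z_+' F'(z_+) F(z_-) h + t z_-' F(z_+) F'(z_-) h + t h' F(z_+) F(z_-).
\]
Since $P(t;x,b) = t P'(t;x) + b P(t;x)$, and since $P(t;x)=F(z_+)F(z_-)h$, collecting terms gives
\[
P(t;x,b) = \Bigl( \tfrac{t z_+'}{1}\cdot\tfrac{F'(z_+)}{F(z_+)} + \tfrac{t z_-'}{1}\cdot\tfrac{F'(z_-)}{F(z_-)} + t h'/h + b \Bigr) F(z_+)F(z_-) h .
\]
Now the point of the hypothesis on $b$ is precisely that it is chosen so that the $F(z_\pm)^2$-contributions cancel. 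Indeed, when $\tau_\pm\in\mathbb H$ satisfy $T(\tau_\pm)=z_\pm$, Lemma~\ref{F'F} (applied at $T=z_+$ and at $T=z_-$) gives
\[
2 F(z_\pm) F'(z_\pm) = \frac{1}{C(\tau_\pm) z_\pm \pi} + \frac{B(\tau_\pm)}{z_\pm} F(z_\pm)^2,
\]
so that
\[
\frac{F'(z_\pm)}{F(z_\pm)} = \frac{1}{2 C(\tau_\pm) z_\pm \pi\, F(z_\pm)^2} + \frac{B(\tau_\pm)}{2 z_\pm}.
\]
Substituting this into the bracketed factor and using $F(z_\pm)^2 = Y(\tau_\pm)$ (which is the content of part~(1) of Lemma~\ref{TT} together with definition~\eqref{YBC1}), the terms $\tfrac{t z_\pm' B(\tau_\pm)}{2 z_\pm}$ and $t h'/h$ together are exactly $-b$ by the stated formula for $b$, hence cancel $b$. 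What survives is
\[
P(t;x,b) = \left( \frac{t z_+' }{2 C(\tau_+) z_+ \pi\, Y(\tau_+)} + \frac{t z_-'}{2 C(\tau_-) z_- \pi\, Y(\tau_-)} \right) Y(\tau_+)^{1/2} Y(\tau_-)^{1/2}\, h,
\]
and distributing $Y(\tau_+)^{1/2}Y(\tau_-)^{1/2}$ over the two summands produces exactly the right-hand side of \eqref{Key1} (the first term picks up $(Y(\tau_-)/Y(\tau_+))^{1/2}$, the second $(Y(\tau_+)/Y(\tau_-))^{1/2}$), after moving the $1/\pi$ out front. This is the whole argument at the level of bookkeeping.

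The step I expect to require the most care is the \emph{consistency of branches}: Lemma~\ref{F'F} and $F(z_\pm)^2=Y(\tau_\pm)$ hold under the hypotheses $|T(\tau)|<1$ and ``$t$ sufficiently close to $0$,'' whereas the square roots $Y(\tau_\pm)^{1/2}$ in \eqref{Key1} must be taken as the analytic continuations of $F(z_\pm)={}_2F_1(\tfrac14,\tfrac14;1;z_\pm)$, which near $t=0$ has value $1$; one has to check that with $z_\pm\to z_\mp$ corresponding to $\tau_\pm\to\tau_\mp$ under the involution of $X_0(2)$ (respectively the Atkin--Lehner structure of $X_0(2)+$ in the $W$ case), the two occurrences of $Y^{1/2}$ are taken with compatible signs so that $Y(\tau_+)^{1/2}Y(\tau_-)^{1/2}=F(z_+)F(z_-)$ with no spurious sign. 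Once the identity is established for $t$ in a neighborhood of $0$ where everything converges, it extends by analytic continuation to every $(t,x)$ for which \eqref{pdecomp2} (resp.\ \eqref{wdecomp}) holds and $z_\pm(t;x)$ (resp.\ $f_\pm(t;x)$) lie in the domain of $T^{-1}$; the evaluation at the specific CM points of Conjecture~\ref{sunconj} is deferred to Section~\ref{cm}. A secondary, purely computational point is verifying that the functions $Y,B,C$ defined by \eqref{YBC1}--\eqref{YBC3} are the ones that make Lemma~\ref{F'F} applicable at $T=z_\pm$ — but that is exactly what Lemma~\ref{F'F} asserts, so no new work is needed here beyond citing it at the two values $T=z_+$ and $T=z_-$.
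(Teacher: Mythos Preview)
Your proposal is correct and follows essentially the same approach as the paper: differentiate the decomposition \eqref{pdecomp2}, apply Lemma~\ref{F'F} at $T=z_{\pm}$, use $F(z_{\pm})^{2}=Y(\tau_{\pm})$, and choose $b$ to kill the non-$1/\pi$ terms. The only cosmetic difference is that you factor out $F(z_{+})F(z_{-})h$ and work with the logarithmic derivatives $F'(z_{\pm})/F(z_{\pm})$, whereas the paper keeps the expression in terms of $2\,{}_{2}F_{1}\cdot{}_{2}F_{1}'$ and the ratios $Z_{\pm}/Z_{\mp}=(Y(\tau_{\pm})/Y(\tau_{\mp}))^{1/2}$ before substituting; the computations are line-for-line equivalent.
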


\begin{proof}
As lines of derivations are similar, we justify~\eqref{Key1} as an illustration.

Following the notation as defined in Lemma~\ref{decomp}, one can start with the simple relation $P(t;x,b)=tP'(t;x)+bP(t;x)$. By~\eqref{zpm} and part (1) of Lemma~\ref{TT},
\begin{equation}
    \label{pfzz}
    P(t;x)=Z_{+}(t;x)Z_{-}(t;x)h(t;x)=\left(Y(\tau_{+})Y(\tau_{-})\right)^{\frac{1}{2}}h(t;x),
\end{equation}
  and 
one can deduce that
\begin{align}
    tP'(t;x)&= {}_{2}F_{1}'\left(\frac{1}{4},\frac{1}{4};1;z_{+}(t,x)\right)\times Z_{-}(t;x)z_{+}'(t;x) h(t;x)t\nonumber\\
    &\quad +Z_{+}(t;x)\times {}_{2}F_{1}'\left(\frac{1}{4},\frac{1}{4};1;z_{-}(t,x)\right)z_{-}'(t;x) h(t;x)t\nonumber\\
        &\quad+Z_{+}(t;x)\times Z_{-}(t;x) h'(t;x)t\nonumber\\
        &=2\cdot{}_{2}F_{1}\left(\frac{1}{4},\frac{1}{4};1;z_{+}(t,x)\right){}_{2}F_{1}'\left(\frac{1}{4},\frac{1}{4};1;z_{+}(t,x)\right)\times \frac{Z_{-}(t;x)}{2Z_{+}(t;x)}z_{+}'(t;x) h(t;x)t \label{pzzh}\\
    &\quad +\frac{Z_{+}(t;x)}{2Z_{-}(t;x)}\times2\cdot{}_{2}F_{1}\left(\frac{1}{4},\frac{1}{4};1;z_{-}(t,x)\right) {}_{2}F_{1}'\left(\frac{1}{4},\frac{1}{4};1;z_{-}(t,x)\right)z_{-}'(t;x) h(t;x)t\nonumber\\
        &\quad+Z_{+}(t;x)\times Z_{-}(t;x) h'(t;x)t.\nonumber
\end{align}
Note by part (1) of Lemmas~\ref{TT} and~\ref{F'F} that
\begin{equation}\label{zzyy}
    \frac{Z_{\pm}(t;x)}{Z_{\mp}(t;x)}=\left(\frac{Y(\tau_{\pm})}{Y(\tau_{\mp})}\right)^{\frac{1}{2}}
\end{equation}
and
\begin{align}
    &2\cdot{}_{2}F_{1}\left(\frac{1}{4},\frac{1}{4};1;z_{\pm}(t,x)\right){}_{2}F_{1}'\left(\frac{1}{4},\frac{1}{4};1;z_{\pm}(t,x)\right)\nonumber\\
    &=\frac{1}{C(\tau_{\pm})T(\tau_{\pm})\pi}+\frac{B(\tau_{\pm})}{T(\tau_{\pm})}{}_{2}F_{1}\left(\frac{1}{4},\frac{1}{4};1;T(\tau_{\pm})\right)^{2}\nonumber\\
    &=\frac{1}{C(\tau_{\pm})T(\tau_{\pm})\pi}+\frac{B(\tau_{\pm})}{T(\tau_{\pm})}Y(\tau_{\pm}).\label{ctby}
\end{align}
Substituting~\eqref{zzyy} and~\eqref{ctby} into~\eqref{pzzh} and adding up the resulting expression and~\eqref{pfzz}, one obtains 
\begin{align*}
    &P(t;x,b)\\
        &=\left(\frac{z_{+}'(t;x) h(t;x)t}{2C(\tau_{+})z_{+}(t;x)}\times\left({\frac{Y(\tau_{-})}{Y(\tau_{+})}}\right)^{\frac{1}{2}}+\frac{z_{-}'(t;x) h(t;x)t}{2C(\tau_{-})z_{-}(t;x)}\times\left({\frac{Y(\tau_{+})}{Y(\tau_{-})}}\right)^{\frac{1}{2}}\right)\frac{1}{\pi}\nonumber\\
        &\quad+(Y(\tau_{+})Y(\tau_{-}))^{\frac{1}{2}}\left(\frac{aB(\tau_{+})z_{+}'(t;x)h(t;x)t}{2z_{+}(t;x)}+\frac{aB(\tau_{-})z_{-}'(t;x)h(t;x)t}{2z_{-}(t;x)}+ah'(t;x)t+bh(t;x)\right).\nonumber
\end{align*}
Clearly, $h(t;x)\ne0$, so setting 
$$
b=-\frac{1}{h(t;x)}\left(\frac{B(\tau_{+})z_{+}'(t;x)h(t;x)t}{2z_{+}(t;x)}+\frac{B(\tau_{-})z_{-}'(t;x)h(t;x)t}{2z_{-}(t;x)}+h'(t;x)t\right)
$$
one obtains the desired identity.
\end{proof}

\section{CM points and evaluations }\label{cm} To prove Theorem \ref{main} and Theorem,\ref{main2}, we apply Proposition \ref{T1}. Namely, we explicitly compute the right-hand side of the identity \eqref{Key1} (resp. \eqref{Key2}), which reduces the proof to a case-by-case calculation for the coefficient of $1/\pi$ and demonstrates the vanishing of the remaining term. For brevity, we only consider the case \eqref{cp1} when $t=1/100, x =9/4,$ and $b=1/12,$ as the same method applies to all of the cases. All evaluations are detailed in Tables~\ref{table1}--~\ref{table3}.
\vspace{-0.1in}
\subsection{CM points $\tau_{\pm}$}
We first by employing the theory of complex multiplication to calculate the quadratic points on the upper half plane, referred to as CM points $\tau\in\mathbb{H}$. These points must satisfy the condition
\begin{align}\label{Tz}
    T(\tau)=z_{\pm}(t;x)~\mathrm{or}~ f_{\pm}(t;x).
\end{align}
Recall that each CM point $\tau$ of level $N$ corresponds to a positive definite, integral binary quadratic form $[a,b,c]$ with $N|a$ and discriminant $-D=b^2-4ac<0$, following the equation $a\tau^2+b\tau+c=0.$
By observation, $z_{\pm}(t;x)$  and $f_{\pm}(t;x)$ are all algebraic of degree at most $2.$ Since $X_0(2)$ and $X_0(2)+$ have genus zero, $T(\tau)$ generates a number field of degree $H(D)$ or $2H(D),$ bounded by $2,$ where $H(D)$ is the Hurwitz--Kronecker class number of discriminant $-D$\footnote{Note that $-D$ may not be a fundamental discriminant.}. Hence, we provide a list of $D.$ for which $H(D)\leq 2.$
\begin{itemize}
    \item $(H(D)=1):$ $D=3,4,7,8,11,12,16,19,27,28,43,67,163.$
    \item  $(H(D)=2):$ $D=15,20,24,32,35,36,40,48,51,52,60,64,72,75,88,91,99,100,112,115,123,\\ \phantom{}\hspace{1.18in}147,148,187,232,235,267,403,427.$
\end{itemize}
 Hence, by computing all equivalent quadratic forms with $D$ from the provided list, we identify a form whose corresponding CM point $\tau_0\in X_0(2)$ (resp. $X_0(2)+$) satisfies \eqref{Tz}, up to the action of a scaling matrix $\gamma\in {\rm SL}_2(\mathbb{Z}).$

\begin{example} Let $t=1/100, x =9/4.$ Then we have to solve the following equations for $\tau$
\begin{align}\label{TE1}
T(\tau)=z_{\pm}\left(\frac{1}{100},\frac{9}{4}\right)=\frac{47\mp45\sqrt{-7}}{128}.
\end{align}
% which the corresponding minimal polynomial is
% \[
% F(X)=64 X^2 + 47 X + 64\in\mathbb{Z}[X].
% \]
Using the algebraic relation with the modular $j$-function
\begin{align}\label{jid}
 j(\tau)=-\frac{(16-64T(\tau))^3}{64T(\tau)},
\end{align}
we find that two ${\rm  SL}_2(\mathbb{Z})$ equivalent forms $[2,1,1]$ and $[2,-1,1]$ provide level 2 CM points of discriminant $-7$
\[
\tau_{\pm}=\frac{\pm1+\sqrt{-7}}{4}\in X_0(2),
\]
such that \eqref{TE1} holds. 
To be more precise, by considering \eqref{TE1} and \eqref{jid}, we solve for the equation involving the singular moduli
\[
  j(\tau)=-3375.
\]
Hence, we consider the short Weierstrass model
$$E/\mathbb{C}: Y^2=X^3+\frac{3j(\tau)}{1728-j(\tau)}X+\frac{2j(\tau)}{1728-j(\tau)}$$
with the $j$-invariant $j(E)=j(\tau)=-3375,$ which implies $E$ is a CM elliptic curve. After a careful computation for each $D$ in the above list, we identify $\tau_{\pm}$ as the desired CM points in Tables 1--3.

\end{example}

\subsection{Evaluations of $B(\tau)$}

For a CM point $\tau_{0}$, suppose that $\tau_{0}=\gamma\cdot\tau_{0}$ for some $\gamma=\begin{pmatrix}a&b\\c&d\end{pmatrix}$ with $ad-bc=m$ square-free and $2|c$ and $\gcd(2,a)=1$. Let $M_{2}'(m)$ denote the set of integral matrices  $\begin{pmatrix}A&B\\C&D\end{pmatrix}$ with $AD-BC=m$ square-free and $2|C$ and $\gcd(2,A)=1$. Then $\Gamma_{0}(2)$ has a well defined group action on $M_{2}'(m)$ by left matrix multiplication, and it is known that the number of right cosets of $\Gamma_{0}(2)\backslash M_{2}'(m)$ is finite, say, $\{\gamma_{j}\}$ with $\gamma_{1}=\gamma$. 

Recall that $B(\tau)=\frac{1}{(1-T(\tau))^{\frac{1}{2}}Y(\tau)^{2}}\left(\frac{1}{2\pi i}\frac{dY}{d\tau}-\frac{Y(\tau)}{2\pi{\rm Im}(\tau)}\right)$.
Let $f(\tau)=\frac{1}{Y(\tau)2\pi i}\frac{dY}{d\tau}-\frac{1}{2\pi {\rm Im}(\tau)}$, so that $B(\tau)=\frac{f(\tau)}{(1-T(\tau))^{\frac{1}{2}}Y(\tau)}$. For a $\gamma_{j}=\begin{pmatrix}A&B\\C&D\end{pmatrix}$ a representative for some right coset of $\Gamma_{0}(2)\backslash M_{2}'(m)$, define $F_{j}(\tau)=f(\tau)-\left.f\right|_{2}\gamma_{j}$. Then one can find that $F_{j}(\tau)$  is meromorphic in $\mathbb{H}$ and notice in particular that, when $\gamma_{j}=\gamma_{1}=\gamma$ and $\tau=\tau_{0}$, 
       \begin{align*}
           F_{1}(\tau_{0})&=f(\tau_{0})-\frac{m}{(c\tau_{0}+d)^{2}}f(\tau_{0})=\frac{(c\tau_{0}+d)^{2}-m}{(c\tau_{0}+d)^{2}}f(\tau_{0}).
       \end{align*}
       So the evaluation of $B(\tau_{0})$ amounts to evaluating $\frac{(c\tau_{0}+d)^{2}}{((c\tau_{0}+d)^{2}-n)(1-T(\tau_{0}))^{\frac{1}{2}}}\frac{F_{1}(\tau_{0})}{Y(\tau_{0})}$.

         Now consider the modular polynomial
   $$
   \Phi(X)=\prod_{j=1}^{n}\left(X-F_{j}(\tau)\right).
   $$
It is clear by symmetry that the coefficient of $X^{\ell}$ for $0\leq\ell\leq n$ in $\Phi(X)$ is a meromorphic modular form of weight~$2(n-\ell)$ for $\Gamma_{0}(2)$ and thus, can be written as $Y(\tau)^{n-\ell}Q_{\ell}(T(\tau))$ for some rational function $Q_{\ell}$ in $T(\tau)$. Therefore, $\frac{F_{j}(\tau)}{Y(\tau)}$ is a root of 
$$
\Psi(X)=\sum_{\ell=1}^{n}Q_{\ell}(T(\tau))X^{\ell}.
$$

\begin{example}
    Take $\tau_{0}=\frac{1+\sqrt{-7}}{4}$, so that $\gamma=\begin{pmatrix}
        1&-1\\2&0
    \end{pmatrix}$ of determinant~$2$. Then 
    $$
    \Gamma_{0}(2)\backslash M_{2}'(2)=\left\{\begin{pmatrix}
        1&-1\\2&0
    \end{pmatrix},\begin{pmatrix}
        1&0\\0&2
    \end{pmatrix}\right\},
    $$
and one can find that
\begin{align*}
    \frac{F_{1}(\tau)}{Y(\tau)}+\frac{F_{2}(\tau)}{Y(\tau)}&=0,\\
    \frac{F_{1}(\tau)}{Y(\tau)}\times\frac{F_{2}(\tau)}{Y(\tau)}&=\frac{1}{4}T(\tau).
\end{align*}
As an implication, one can tell that $\frac{F_{1}(\tau_{0})}{Y(\tau_{0})}$ is a root of 
$$
\Psi(X)=X^{2}+\frac{1}{4}T(\tau_{0})=X^{2}+\frac{47-45\sqrt{-7}}{512}
$$
and find that
$$
\frac{F_{1}(\tau_{0})}{Y(\tau_{0})}=-\sqrt{\frac{-47+45\sqrt{-7}}{512}}.
$$
As a result of this, one obtains
$$
B(\tau_{0})=\frac{2\tau_{0}^{2}}{(2\tau_{0}^{2}-1)(1-T(\tau_{0}))^{\frac{1}{2}}}\frac{F_{1}(\tau_{0})}{Y(\tau_{0})}=-\frac{1}{4}-\frac{\sqrt{-7}}{84}
$$
 displayed as in Table 1.
\end{example}

\subsection{Evaluations of $\frac{Y(\tau_{+})}{Y(\tau_{-})}$} 

Let $y(\tau)=\frac{1}{2\pi i}\frac{dt}{d\tau}$. So it suffices to evaluate $\frac{y(\tau_{+})}{y(\tau_{-})}$.
Notice that $\tau_{+}$ and $\tau_{-}$ can be algebraically related by some integral matrix $\gamma=\begin{pmatrix}
    a&b\\c&d
\end{pmatrix}$ via $\tau_{+}=\gamma\cdot\tau_{-}$, so the evaluation of $\frac{y(\tau_{+})}{y(\tau_{-})}$ amounts to computing the modular function $\xi_{\gamma}(\tau)=\frac{\left.y\right|_{2}\gamma}{y(\tau)}$ at the CM point $\tau_{-}$. Suppose that $\gamma\in M_{2}(m)$, where $M_{2}(m)$ denotes the set of integral matrices $\begin{pmatrix}
    A&B\\C&D
\end{pmatrix}$ with $2|C$. So it is clear that $\xi_{\gamma}(\tau)$ only depends on residue classes of $\Gamma_{0}(2)\backslash M_{2}(m)$, and thus by symmetry one can tell that the coefficients of the polynomial
$$
\Xi(X;\tau)=\prod_{\gamma\in\Gamma_{0}(2)\backslash M_{2}(m)}\left(X-\xi_{\gamma}(\tau)\right)
$$
are modular functions for $\Gamma_{0}(2)$ and thus, rational functions $R_{j}(y)$ in $T(\tau)$, that is to say,
$$
\Xi(X;\tau)=X^{n}+R_{n-1}(T(\tau))X^{n-1}+\cdots+R_{0}(T(\tau)).
$$
Therefore, $\xi_{\gamma}(\tau_{-})$ must be a root of $\Xi(x;\tau_{-})$, and this provides a rigorous evaluation to $\frac{y(\tau_{+})}{y(\tau_{-})}$.

\begin{example}   Take $\tau_{\pm}=\frac{\pm1+\sqrt{-7}}{4}$ and $\gamma=\begin{pmatrix}
        0&-1\\2&0
    \end{pmatrix}\in M_2(2)$ so that $\tau_+=\gamma\cdot\tau_-.$ 
 Note that 
    $$
    \Gamma_{0}(2)\backslash M_{2}(2)=\left\{\gamma_{1}=\begin{pmatrix}
        1&0\\0&2
    \end{pmatrix},\gamma_{2}=\begin{pmatrix}
        1&1\\0&2
    \end{pmatrix}, \gamma_{3}=\begin{pmatrix}
        2&0\\0&1
    \end{pmatrix},\gamma_{4}=\begin{pmatrix}
        0&-1\\2&0
    \end{pmatrix},\gamma_{5}=\begin{pmatrix}
        0&-1\\2&1
    \end{pmatrix}\right\},
    $$
and one can find that
\begin{align*}
    \Xi(X;\tau)&=X^{5}+(128T(\tau)-48)X^{4}-\frac{(32767T(\tau)^4+4088T(\tau)^3+256T(\tau)-256)}{(256T(\tau)^4(T(\tau)-1))}X^{3}\\
    &\quad+\frac{(128T(\tau)^5+976T(\tau)^4-384T(\tau)^3-32767T(\tau)^2+45064T(\tau)-12288)}{(256T(\tau)^4(-1+T(\tau)))}X^{2}\\
    &\quad-\frac{(12T(\tau)^4+93T(\tau)^3-2072T(\tau)^2+1792T(\tau)+256)}{(16T(\tau)^5(T(\tau)-1)^2)}X-\frac{(T(\tau)+8)(8T(\tau)+1)}{(16T(\tau)^4(T(\tau)-1)^2)}
\end{align*}
Then $\frac{y|_{2}\gamma_{4}}{y}(\tau_{-})$ is a root of
\begin{align*}
   \Xi(X;\tau_{-})&=\frac{1}{4294967296} (67\sqrt{-7}+8192X-161)(2115\sqrt{-7}-8192X+5983)(181\sqrt{-7}+4X-7)\\
   &\quad\times(\sqrt{-7}-4X-3)(\sqrt{-7}+4X+3).
\end{align*}
So making use of the fact $T(\tau_{-})=\frac{47-45\sqrt{-7}}{128}$, and comparing the values numerically, one can find that
$$
\frac{y|_{2}\gamma_{4}}{y}(\tau_{-})=\frac{5983+2115\sqrt{-7}}{8192}.
$$
Subsequently, this yields the desired result
$$
\frac{Y(\tau_{+})}{Y(\tau_{-})}=\frac{31-3\sqrt{-7}}{32}.
$$
    
\end{example}

\vspace{-0.199in}

\begingroup
\setlength{\tabcolsep}{2.5pt} 
\renewcommand{\arraystretch}{1.5}
\begin{center}
\footnotesize
\begin{table}[!ht]
\begin{tabular}{|c|c|c|c|c|c|c|c|}
 \hline
%\multicolumn{3}{c}{Defective $u_n(\alpha, \beta)$ for $n \in \{ 3, 4, 6 \}$} \\ \hline
%\multicolumn{3}{c}{} \\ \hline
Case & $(t;x)$ & $h(t;x)$ & $z_\pm(t;x)$ & $\tau_\pm$&$B(\tau_{\pm})$  & $C(\tau_{\pm})$ & $\frac{Y(\tau_{\pm})}{Y(\tau_{\mp})}$   \\ \hline \hline
\eqref{cp1} &$( \frac{1}{100},\frac{9}{4})$ & $\frac{5}{4}$ & $\frac{47\mp45\sqrt{-7}}{128}$ & $\frac{\pm1+ \sqrt{-7}}{4}$   & $\frac{-1}{4}\mp\frac{\sqrt{-7}}{84}$& $\frac{15\sqrt{7}\pm 21\sqrt{-1}}{32}$ & $\frac{31\mp3\sqrt{-7}}{32}$ \\ \hline
\eqref{p1} &$( -\frac{1}{192},4)$ & $\frac{\sqrt{3}}{2}$ & $\frac{1}{4},\frac{1}{4}$ & $\frac{\pm1+ \sqrt{-3}}{2}$   & $-\frac{1}{6},-\frac{1}{6}$& $\frac{3}{2},\frac{3}{2}$ & $1,1$ \\ \hline
\eqref{cp2}&$(  \frac{-1}{225},-14)$ & $15$& $\hspace{-1pt}-32(45\hspace{-2pt}\mp\hspace{-2pt}17\sqrt{7})^2\hspace{-1pt}$ & $\frac{\sqrt{-7}}{2},\frac{\sqrt{-7}}{14}$  & $\frac{-16}{57}\pm\frac{8\sqrt{7}}{133}$ & $-672+255\sqrt{7}, \frac{672+255\sqrt{7}}{7}$ 
 & $\frac{127+48\sqrt{7}}{7}, 889-336\sqrt{7}$\\ \hline
\eqref{cp3}&$(  \frac{1}{289},18)$ & $17$ & $\hspace{-1pt}-32(45\hspace{-2pt}\pm\hspace{-2pt}17\sqrt{7})^2\hspace{-1pt}$& $\frac{\sqrt{-7}}{14}, \frac{\sqrt{-7}}{2}$  & $\frac{-16}{57}\mp\frac{8\sqrt{7}}{133}$ & $ \frac{672+255\sqrt{7}}{7}, -672+255\sqrt{7}$  &$889-336\sqrt{7}, \frac{127+48\sqrt{7}}{7}$   \\ \hline
 \eqref{cp4}&$(  \frac{-1}{576},-32)$& $3$ & $-(9\mp4\sqrt{5})^2$& $\frac{\sqrt{-10}}{2},\frac{\sqrt{-10}}{10}$  & $\frac{-1}{4}\pm\frac{\sqrt{5}}{15}$ & $30-12\sqrt{5}, \frac{30+12\sqrt{5}}{5}$ & $\frac{9+4\sqrt{5}}{5}, 45-20\sqrt{5}$ \\ \hline
\eqref{cp5}&$(  \frac{1}{640},36)$ & $\sqrt{10}$ & $-(9\pm4\sqrt{5})^2$&$\frac{\sqrt{-10}}{10},\frac{\sqrt{-10}}{2}$  & $\frac{-1}{4}\mp\frac{\sqrt{5}}{15}$ & $\frac{30+12\sqrt{5}}{5}, 30-12\sqrt{5}$ & $45-20\sqrt{5}, \frac{9+4\sqrt{5}}{5}$
 \\ \hline
\eqref{cp6}&$(  \frac{-1}{3136},-192)$& $7$ & $-(49\mp20\sqrt{6})^2$& $\frac{3\sqrt{-2}}{2},\frac{\sqrt{-2}}{6}$ &  $\frac{-1}{4}\pm\frac{\sqrt{6}}{14}$  & $210-84\sqrt{6},\frac{70+28\sqrt{6}}{3}$ & $\frac{49+20\sqrt{6}}{9}, 441-180\sqrt{6}$ \\ \hline
\eqref{cp7}&$(  \frac{1}{3200},196)$& $5\sqrt{2}$ & $-(49\pm20\sqrt{6})^2$& $\frac{\sqrt{-2}}{6},\frac{3\sqrt{-2}}{2}$ &  $\frac{-1}{4}\mp\frac{\sqrt{6}}{14}$ & 
 $\frac{70+28\sqrt{6}}{3}, 210-84\sqrt{6}$ & $441-180\sqrt{6},\frac{49+20\sqrt{6}}{9}$\\ \hline
\eqref{cp8}&$(  \frac{-1}{6336},-392)$& $3\sqrt{22}$ & $-(90\mp70\sqrt{2})^2$& $\frac{\sqrt{-22}}{2}, \frac{\sqrt{-22}}{22}$ & $\frac{-1}{4}\pm\frac{17\sqrt{2}}{132}$ & $-462+330\sqrt{2}, 42+30\sqrt{2}$ & $\frac{99+70\sqrt{2}}{11},1089-770\sqrt{2}$
  \\ \hline
\eqref{cp9}&$(  \frac{1}{6400},396)$& $10$ & $-(90\pm70\sqrt{2})^2$& $\frac{\sqrt{-22}}{22}, \frac{\sqrt{-22}}{2}$  & $\frac{-1}{4}\mp\frac{17\sqrt{2}}{132}$  & $42+30\sqrt{2},-462+330\sqrt{2}$  & $1089-770\sqrt{2}, \frac{99+70\sqrt{2}}{11}$\\ \hline
\eqref{cp10}&$(  \frac{-1}{18432},-896)$& $\frac{3\sqrt{2}}{2}$ & $-\frac{(45\mp17\sqrt{7})^2}{128}$& $\sqrt{-7},\frac{\sqrt{-7}}{7}$  & $\frac{-25}{114}\pm\frac{8\sqrt{7}}{133}$  & $\frac{51\sqrt{14}-105\sqrt{2}}{8},\frac{51\sqrt{14}+105\sqrt{2}}{56}$ & $\frac{8+3\sqrt{7}}{7}, 56-21\sqrt{7}$\\ \hline
\eqref{cp11}&$(  \frac{1}{18496},900)$ & $\frac{17}{8}$& $-\frac{(45\pm17\sqrt{7})^2}{128}$&$\frac{\sqrt{-7}}{7},\sqrt{-7}$   &  $\frac{-25}{114}\mp\frac{8\sqrt{7}}{133}$  & $\frac{51\sqrt{14}+105\sqrt{2}}{56}, \frac{51\sqrt{14}-105\sqrt{2}}{8}$ & $56-21\sqrt{7}, \frac{8+3\sqrt{7}}{7}$ \\ \hline
\end{tabular}
\bigskip
    \caption{CM points $\tau_\pm$ along with corresponding computations for cases \eqref{cp1}--\eqref{cp11}.}
\label{table1}
\textit{}
\end{table}
\normalsize
\end{center}
\endgroup

\begingroup
\setlength{\tabcolsep}{2.pt} 
\renewcommand{\arraystretch}{1.8}
\begin{center}
\footnotesize
\begin{table}[!ht]
\begin{tabular}{|c|c|c|c|c|c|c|c|}
 \hline
%\multicolumn{3}{c}{Defective $u_n(\alpha, \beta)$ for $n \in \{ 3, 4, 6 \}$} \\ \hline
%\multicolumn{3}{c}{} \\ \hline
Case & $(t;x)$ & $h(t;x)$ & $z_\pm(t;x)$ & $\tau_\pm$&$B(\tau_{\pm})$  & $C(\tau_{\pm})$ & $\frac{Y(\tau_{\pm})}{Y(\tau_{\mp})}$   \\ \hline \hline
% {\bf(new?)}&$(  -\frac{1}{36},-2)$ & $3$& $-416\pm240\sqrt{3}$ & $\frac{\sqrt{-3}}{2},\frac{\sqrt{-3}}{6}$  & $\frac{-10\pm2\sqrt{3}}{33}$ & $ -24+15\sqrt{3},8+5\sqrt{3}$ 
%  & $\frac{7+4\sqrt{3}}{3}, 21-12\sqrt{3}$\\ \hline
\eqref{p2}&$(  \frac{1}{100},6)$ & $5$& $-416\mp240\sqrt{3}$ & $\frac{\sqrt{-3}}{6},\frac{\sqrt{-3}}{2}$  & $\frac{-10\mp2\sqrt{3}}{33}$ & $8+5\sqrt{3}, -24+15\sqrt{3}$ 
 & $21-12\sqrt{3}, \frac{7+4\sqrt{3}}{3}$\\ \hline
\eqref{p3}&$(  -\frac{1}{192},-8)$ & $\sqrt{3}$ & $-17\pm12\sqrt{2}$& $\frac{\sqrt{-6}}{2}, \frac{\sqrt{-6}}{6}$  & $\frac{-3\pm\sqrt{2}}{12}$ & $-6+6\sqrt{2}, 2+2\sqrt{2}$  &$\frac{3+2\sqrt{2}}{3}, 9-6\sqrt{2}$   \\ \hline
 \eqref{p4}&$(  \frac{1}{256},12)$& $2$ & $-17\mp12\sqrt{2}$&  $\frac{\sqrt{-6}}{6}, \frac{\sqrt{-6}}{2}$  & $\frac{-3\mp\sqrt{2}}{12}$ & $2+2\sqrt{2},-6+6\sqrt{2}$  &$9-6\sqrt{2},\frac{3+2\sqrt{2}}{3}$   \\ \hline
\eqref{p5}&$(  -\frac{1}{1536},-32)$ & $\frac{\sqrt{6}}{2}$ & $\frac{-26\pm15\sqrt{3}}{16}$& $\sqrt{-3},\frac{\sqrt{-3}}{3}$ &  $\frac{-13\pm4\sqrt{3}}{66}$  & $\frac{15\sqrt{2}-3\sqrt{6}}{4}, \frac{5\sqrt{2}+\sqrt{6}}{4}$ & $\frac{2+\sqrt{3}}{3}, 6-3\sqrt{3}$ \\ \hline
\eqref{p6}&$(  \frac{1}{1600},36)$& $\frac{5}{4}$ & $\frac{-26\mp15\sqrt{3}}{16}$& $\frac{\sqrt{-3}}{3},\sqrt{-3}$ &  $\frac{-13\mp4\sqrt{3}}{66}$  & $\frac{5\sqrt{2}+\sqrt{6}}{4},\frac{15\sqrt{2}-3\sqrt{6}}{4}$ & $6-3\sqrt{3}, \frac{2+\sqrt{3}}{3}$ \\ \hline
\eqref{p7}&$(  \frac{1}{3136},-60)$& $\frac{7}{8}$ & $\frac{47\mp21\sqrt{5}}{128}$& $\frac{-1+\sqrt{-15}}{2},\frac{3+\sqrt{-15}}{6}$ &  $-\frac{3}{22}\pm\frac{4\sqrt{5}}{165}$ & $\frac{15+21\sqrt{5}}{16},\frac{-5+7\sqrt{5}}{16}$
  & $\frac{3+\sqrt{5}}{6},\frac{9-3\sqrt{5}}{2}$\\ \hline
\eqref{p8}&$(  -\frac{1}{3072},64)$& $\frac{\sqrt{3}}{2}$ & $\frac{47\pm21\sqrt{5}}{128}$& $\frac{3+\sqrt{-15}}{6}, \frac{-1+\sqrt{-15}}{2}$ &  $-\frac{3}{22}\mp\frac{4\sqrt{5}}{165}$ & $\frac{-5+7\sqrt{5}}{16}, \frac{15+21\sqrt{5}}{16}$
  & $\frac{9-3\sqrt{5}}{2}, \frac{3+\sqrt{5}}{6}$\\ \hline
\end{tabular}
\bigskip
\caption{CM points $\tau_\pm$ along with corresponding computations for cases \eqref{p1}--\eqref{p8}.}
\label{table2}
\textit{}
\end{table}
\normalsize
\end{center}
\endgroup

\begingroup
\setlength{\tabcolsep}{0.23pt} 
\renewcommand{\arraystretch}{2.3}
\tiny
\begin{center}
\begin{table}[!ht]
\begin{tabular}{|c|c|c|c|c|c|c|c|}
 \hline
%\multicolumn{3}{c}{Defective $u_n(\alpha, \beta)$ for $n \in \{ 3, 4, 6 \}$} \\ \hline
%\multicolumn{3}{c}{} \\ \hline
Case & $(t;x)$ & $h(t;x)$ & $\sqrt{-f_\pm(t;x)}$ & $\tau_\pm$&$B(\tau_{\pm})$  & $C(\tau_{\pm})$&$\frac{Y(\tau_{\pm})}{Y(\tau_{\mp})}$ \\ \hline \hline
%\eqref{cw1} &$( \frac{1}{6},-\frac{1}{8})$ & $??$ & $-\frac{1}{48}, \frac{}{0}?$  &   & & & \\ \hline
\eqref{cw2}&$(  \frac{-1}{108},\frac{-49}{12})$ & $\frac{9\sqrt{133}}{133}$ & $\frac{275\pm112\sqrt{6}}{1083}$ & $\frac{\sqrt{-42}}{2},\frac{\sqrt{-42}}{6}$   & $\frac{-13}{140}\pm\frac{\sqrt{6}}{56}$ &   $\frac{4620\sqrt{2}+280\sqrt{3}}{1083}, \frac{4620\sqrt{2}-280\sqrt{3}}{3249}$ & $\frac{31+10\sqrt{6}}{57},\frac{93-30\sqrt{6}}{19}$\\ \hline
\eqref{cw3}&$(  \frac{1}{112},\frac{63}{16})$ & $\frac{4\sqrt{133}}{57}$&$\frac{275\mp112\sqrt{6}}{1083}$ & $\frac{\sqrt{-42}}{6},\frac{\sqrt{-42}}{2}$ & $\frac{-13}{140}\mp\frac{\sqrt{6}}{56}$  &  $\frac{4620\sqrt{2}-280\sqrt{3}}{3249}, \frac{4620\sqrt{2}+280\sqrt{3}}{1083}$ & $\frac{93-30\sqrt{6}}{19},\frac{31+10\sqrt{6}}{57}$\\ \hline
 \eqref{cw4}&$(  \frac{-1}{320},\frac{-405}{64})$& $\frac{16\sqrt{105}}{189}$ & $\frac{253\pm80\sqrt{10}}{567}$ & $\frac{\sqrt{-70}}{2},\frac{\sqrt{-70}}{10}$  & $\frac{-1543}{16120}\pm\frac{147\sqrt{10}}{8060}$& $\frac{920\sqrt{14}+110\sqrt{140}}{567},\frac{184\sqrt{14}-22\sqrt{140}}{567}$&  $\frac{7+2\sqrt{10}}{15},\frac{35-10\sqrt{10}}{3}$\\ \hline
\eqref{cw5}&$(  \frac{1}{324},\frac{25}{4})$ & $\frac{3\sqrt{105}}{35}$  & $\frac{253\mp80\sqrt{10}}{567}$ & $\frac{\sqrt{-70}}{10},\frac{\sqrt{-70}}{2}$  &$\frac{-1543}{16120}\mp\frac{147\sqrt{10}}{8060}$ & $\frac{184\sqrt{14}-22\sqrt{140}}{567}, \frac{920\sqrt{14}+110\sqrt{140}}{567}$
 & $\frac{35-10\sqrt{10}}{3},\frac{7+2\sqrt{10}}{15}$\\ \hline
\eqref{cw6}&$(  \frac{-1}{1296},\frac{-625}{9})$& $\frac{54\sqrt{217}}{1085}$  & $\frac{1103\pm7800\sqrt{2}}{141267}$  & $\frac{\sqrt{-78}}{2},\frac{\sqrt{-78}}{6}$  &$\frac{-5983\pm2097\sqrt{2}}{83720}$ & $\frac{4420\sqrt{3}}{141267}+\frac{168740\sqrt{6}}{47089}, \frac{-4420\sqrt{3}}{423801}+\frac{168740\sqrt{6}}{141267}$ & $\frac{361}{651}\hspace{-2pt}+\hspace{-2pt}\frac{68\sqrt{2}}{217},\frac{1083-612\sqrt{2}}{217}$\\ \hline
\eqref{cw7}&$(  \frac{1}{1300},\frac{900}{13})$&$\frac{65\sqrt{217}}{1302}$  & $\frac{1103\mp7800\sqrt{2}}{141267}$ &  $\frac{\sqrt{-78}}{6},\frac{\sqrt{-78}}{2}$  &$\frac{-5983\mp2097\sqrt{2}}{83720}$ & $\frac{-4420\sqrt{3}}{423801}+\frac{168740\sqrt{6}}{141267}, \frac{4420\sqrt{3}}{141267}+\frac{168740\sqrt{6}}{47089}$ & $\frac{1083-612\sqrt{2}}{217}, \frac{361}{651}\hspace{-2pt}+\hspace{-2pt}\frac{68\sqrt{2}}{217}$\\ \hline
\eqref{cw8}&$(  \frac{-1}{5776}\hspace{-1pt},\hspace{-2pt}\frac{-83521}{361})$& $\frac{722\sqrt{329}}{16779}$  & $\frac{208329\pm25840\sqrt{65}}{974169}$ & $\frac{\sqrt{-130}}{2}\hspace{-1pt},\hspace{-2pt}\frac{\sqrt{-130}}{10}$  &$\frac{-9887}{123830}\hspace{-2pt}\pm\hspace{-2pt}\frac{5184\sqrt{65}}{804895}$ &
$\frac{691220\sqrt{26}}{324723}
\hspace{-2pt}+\hspace{-2pt}\frac{168740\sqrt{10}}{974169}
\hspace{-1pt},\hspace{-1pt}
\frac{138244\sqrt{26}}{324723}
\hspace{-2pt}-\hspace{-2pt}\frac{33748\sqrt{10}}{974169}$
&
$\frac{841+96\sqrt{65}}{1645}\hspace{-1pt},\hspace{-2pt}
\frac{4205-480\sqrt{65}}{329}$
  \\ \hline
\eqref{cw9}&$(  \frac{1}{5780},\frac{1156}{5})$& $\frac{85\sqrt{329}}{1974}$ & $\frac{208329\mp25840\sqrt{65}}{974169}$ &  $\frac{\sqrt{-130}}{10}\hspace{-1pt},\hspace{-2pt}\frac{\sqrt{-130}}{2}$  &$\frac{-9887}{123830}\hspace{-2pt}\mp\hspace{-2pt}\frac{5184\sqrt{65}}{804895}$ &
$\frac{138244\sqrt{26}}{324723}
\hspace{-2pt}-\hspace{-2pt}\frac{33748\sqrt{10}}{974169}
\hspace{-1pt},\hspace{-1pt}
\frac{691220\sqrt{26}}{324723}
\hspace{-2pt}+\hspace{-2pt}\frac{168740\sqrt{10}}{974169}$
& $\frac{4205-480\sqrt{65}}{329}\hspace{-1pt},\hspace{-2pt} \frac{841+96\sqrt{65}}{1645}$
  \\ \hline
\end{tabular}
\bigskip
\caption{CM points $\tau_\pm$ along with corresponding computations for cases \eqref{cw2}--\eqref{cw9}.}
\label{table3}
\textit{}
\end{table}
\end{center}
\normalsize
\endgroup
\vspace{-24pt}

\clearpage
\section{Remaining conjectures}

\begin{conjecture}[Z.-W. Sun \cite{S11}] \label{open}
% \begin{enumerate}
% \item
Define
\begin{align}
    \label{UT}
    U(t;x,b)=\sum_{n=0}^{\infty}\binom{2n}{n}\left(\sum_{k=0}^{n}\binom{n}{k}\binom{n+2k}{2k}\binom{2k}{k}x^{n-k}\right)(n+b)t^{n}
\end{align}
and
\begin{align}\label{VT}
    V(t;x,b)=\sum_{n=0}^{\infty}\binom{2n}{n}\left(\sum_{k=0}^{n}{\binom{2k}{k}^{\hspace{-2pt}2}\binom{2n-2k}{n-k}^{\hspace{-2pt}2}}{\binom{n}{k}^{\hspace{-2pt}-1}}x^{k}\right)(n+b)t^{n}.
\end{align}
Then
\begin{align}
    U\left(\frac{1}{2160};-324,\frac{103}{357}\right)&=\frac{30}{119\pi},\\
    U\left(\frac{1}{3645};486,0\right)&=\frac{10}{3\pi},\\
    U\left(\frac{-1}{1728};-324,\frac{1}{6}\right)&=\frac{12\sqrt{375+120\sqrt{10}}}{75\pi},\\
    U\left(\frac{-1}{160};-20,\frac{1}{4}\right)&=\frac{\sqrt{30}}{20\pi}\times\frac{5+\sqrt[3]{145+30\sqrt{6}}}{\sqrt[6]{145+30\sqrt{6}}},\\ %\ \ \textcolor{red}{\rm typo? It's fixed, thanks!}\\
    U\left(\frac{1}{27648};-2160, %%% 1290,
\frac{289}{1290}\right)&=\frac{16\sqrt{15}}{215\pi},\\
    U\left(\frac{1}{276480};12096,\frac{49}{804}\right)&=\frac{10\sqrt{15}}{67\pi},\\
    U\left(\frac{2}{135};-\frac{27}{8},\frac{5}{24}\right)&=\frac{5\sqrt{6}+4\sqrt{15}}{16\pi}, \\
   V\left(\frac{1}{576};5,\frac{5}{28}\right)&=\frac{9(2+\sqrt{2})}{28\pi},\\
    V\left(\frac{1}{576};-\frac{25}{16},\frac{31}{182}\right)&=\frac{189}{364\pi}.
\end{align}
    
% \end{enumerate}
\end{conjecture}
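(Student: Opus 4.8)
The plan is to repeat, essentially verbatim, the three-stage argument behind Theorem~\ref{main} and Theorem~\ref{main2}: a hypergeometric factorization of the bare generating functions, a modular parameterization of the hypergeometric factors, and explicit CM evaluations. Write $U(t;x)$ and $V(t;x)$ for the series in \eqref{UT} and \eqref{VT} with the factor $(n+b)$ removed, so that $U(t;x,b)=tU'(t;x)+bU(t;x)$ and $V(t;x,b)=tV'(t;x)+bV(t;x)$, in parallel with Lemma~\ref{decomp}. First I would run creative telescoping (Zeilberger's algorithm) on these double sums to obtain certified second-order Picard--Fuchs operators for $U(t;x)$ and $V(t;x)$ in $t$, with $x$ a parameter, then compute the exponent differences at the finite singularities to read off the hypergeometric ``signature'', and search for factorizations $U(t;x)=G(w_{+})\,G(w_{-})\,h_{U}(t;x)$ and $V(t;x)=G(v_{+})\,G(v_{-})\,h_{V}(t;x)$ with $G={}_{2}F_{1}(a,1-a;1;\cdot)$ (or $G={}_{2}F_{1}(a,a;1;\cdot)$), with $w_{\pm},v_{\pm}$ algebraic of degree at most $2$ in $t$ and $h_{U},h_{V}$ algebraic; matching sufficiently many Taylor coefficients at $t=0$ then proves these identities exactly as in the proof of Lemma~\ref{decomp}, which in turn mirrors the computation of \cite{saga}. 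Because the inner sums here differ from those of $P$ and $W$, the signature need not be $(1/4,1/4)$ or $(1/8,3/8)$: candidates such as $(1/6,1/6)$, $(1/3,1/3)$ or $(1/12,5/12)$ should be tried.

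Once the signature $(a,1-a)$ is fixed, the Schwarzian computation of Lemma~\ref{TT} singles out a genus-zero modular curve ($X_{0}(N)$ or $X_{0}(N)^{+}$ for an appropriate level $N$) carrying a Hauptmodul $T(\tau)$, built from $\eta$-quotients, with
\[
{}_{2}F_{1}\!\left(a,1-a;1;T(\tau)\right)=\frac{1}{T^{1/2}(1-T)^{1/4}}\Bigl(\frac{1}{2\pi i}\frac{dT}{d\tau}\Bigr)^{1/2}\qquad(|T(\tau)|<1).
\]
The weight-one modularity then feeds, unchanged, into the Maass-operator identity of Lemma~\ref{F'F} (which is signature-independent) and into the proof of Proposition~\ref{T1}, yielding a formula of the shape
\[
U(t;x,b)=\Bigl(\frac{w_{+}'\,h_{U}\,t}{2C(\tau_{+})w_{+}}\Bigl(\frac{Y(\tau_{-})}{Y(\tau_{+})}\Bigr)^{1/2}+\frac{w_{-}'\,h_{U}\,t}{2C(\tau_{-})w_{-}}\Bigl(\frac{Y(\tau_{+})}{Y(\tau_{-})}\Bigr)^{1/2}\Bigr)\frac{1}{\pi},
\]
together with the analogue for $V(t;x,b)$, where $b$ is fixed by the same vanishing condition as in Proposition~\ref{T1}, valid wherever the factorization and the series both converge. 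One must also delimit the precise region of validity, but since $|tx|$ is small at all nine listed points this is only a finite check once $w_{\pm}(t;x)$, $v_{\pm}(t;x)$ and the radii of the two ${}_{2}F_{1}$ factors are known.

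The remaining step --- the CM evaluations --- is where I expect the genuine difficulty, and it is the reason these formulas are still open. For each listed $(t,x)$ one computes $w_{\pm}(t;x)$ (resp.\ $v_{\pm}(t;x)$), uses the $j$-invariant relation, the analogue of \eqref{jid}, to locate the CM points $\tau_{\pm}$, and then evaluates $B(\tau_{\pm})$, $C(\tau_{\pm})$ and $Y(\tau_{+})/Y(\tau_{-})$ by the modular-polynomial method of Section~\ref{cm}, forming the polynomials $\Psi(X)$ and $\Xi(X;\tau)$ and selecting the correct root. The obstruction is that, unlike every case recorded in Tables~\ref{table1}--\ref{table3}, several of these CM points have class number $\ge 3$: the expressions $\sqrt[3]{145+30\sqrt6}$ and $\sqrt[6]{145+30\sqrt6}$ in the fourth $U$-identity, and $\sqrt{375+120\sqrt{10}}$ in the third, signal ring class fields that are cubic (or larger) over the imaginary quadratic base, so that $B(\tau_{\pm})$ and the ratio $Y(\tau_{+})/Y(\tau_{-})$ no longer lie in the small quadratic fields handled before. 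One then has to work inside these degree-$3$ (or higher) extensions, compute the minimal polynomials of $F_{1}(\tau_{0})/Y(\tau_{0})$ and of $\xi_{\gamma}(\tau_{-})$ over $\Q(T(\tau_{0}))$ there, isolate the correct conjugate numerically to provable precision, and finally check both that the coefficient of $1/\pi$ collapses to the stated algebraic value and that the $b$-term vanishes for the stated $b$. Carrying this out uniformly --- in particular controlling the root selection and the algebraic simplification in the higher-class-number cases --- is the main obstacle; by contrast the listed $U$ and $V$ cases of class number at most $2$ (such as the $b=0$ identity $U(1/3645;486,0)=10/(3\pi)$ and the clean-valued $U(2/135;-27/8,5/24)$) should follow by a routine repetition of Section~\ref{cm} once the factorizations of step one are in hand.
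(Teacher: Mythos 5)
This statement is the one set of identities in the paper that is \emph{not} proved: Section~4 records it precisely because the authors' method breaks down here, so your proposal cannot be measured against a proof in the paper --- there is none. The genuine gap in your plan is its very first step. You assume that $U(t;x)$ and $V(t;x)$ admit factorizations of the form $G(w_{+})G(w_{-})h$ with $G$ a Gauss ${}_{2}F_{1}$ and $w_{\pm}$ algebraic of low degree, in parallel with Lemma~\ref{decomp}; everything downstream (the analogues of Lemma~\ref{TT}, Lemma~\ref{F'F} and Proposition~\ref{T1}, and the CM evaluations) is conditional on that. But the paper explicitly reports that no such formula was found for $U(t;x)$, and that $V(t;x)$ could only be written in terms of hyperelliptic integrals --- periods of the same family of higher-genus hyperelliptic curves encountered in \cite{saga} --- rather than as a product of two ${}_{2}F_{1}$'s. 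A hyperelliptic period of genus at least $2$ is not the value of a weight-one modular form pulled back through a Hauptmodul on a genus-zero modular curve, so Lemma~\ref{TT} has no analogue and the modular machinery of Sections~\ref{modpara}--\ref{cm} does not engage. Trying the signatures $(1/6,1/6)$, $(1/3,1/3)$, $(1/12,5/12)$, and so on, as you suggest, is exactly the search one would perform, and it comes up empty; that is why these nine identities remain open.

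Your diagnosis that the difficulty lies in CM points of class number $\ge 3$ is therefore misplaced: the obstruction sits upstream of any CM computation. In particular, your closing claim that the ``class number at most $2$'' cases such as $U(1/3645;486,0)=10/(3\pi)$ should follow by a routine repetition of Section~\ref{cm} ``once the factorizations of step one are in hand'' begs the question, since those factorizations are precisely what is missing. (The cube and sixth roots appearing in the fourth $U$-identity are indeed suggestive of a larger ring class field, but until one either finds a hypergeometric closed form for $U(t;x)$ or learns how to evaluate the hyperelliptic integrals representing $V(t;x)$ at the relevant algebraic arguments, there is nothing to evaluate at CM points.)
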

We did not find a formula for $U(t;x)$, but we can write $V(t;x)$ in terms of hyper-elliptic integrals, the same family of hyper-elliptic curves encountered in \cite{saga}. We listed an expression for $V(t;-\frac{25}{16})$ at \url{http://www.math.fsu.edu/~hoeij/files/Vtx}. The question is how to evaluate these hyper-elliptic integrals? So this part of Conjecture \ref{open} remains open as well.

\end{document}